\def\cleardoublepage{\clearpage\if@twoside \ifodd\c@page\else%
    \hbox{}%
    \thispagestyle{empty}%
    \newpage%
    \if@twocolumn\hbox{}\newpage\fi\fi\fi}
\newtheorem{thm}{Theorem}[section]
\newtheorem{example}[thm]{Example}
\newtheorem{remark}[thm]{Remark}
\newtheorem{lemma}[thm]{Lemma}
\newtheorem{proposition}[thm]{Proposition}
\newtheorem{definition}[thm]{Definition}
\def\a{\alpha}
\def\ba{{\bm \alpha}}
\def\b{\beta}
\def\t{\tau}
\def\x{{\bf x}}
\def\l{{\bm\lambda}}
\def\LD{{_0D_t^\a}}
\def\LDa{{_aD_t^\a}}
\def\LDC{{_a^CD_t^\a}}
\def\RDC{{_t^CD_b^\a}}
\def\RD{{_tD_b^\a}}
\def\LDz{{_0D_t^{0.5}}}
\def\LI{{_aI_t^\a}}
\def\RI{{_tI_b^\a}}
\def\LD{{_aD_t^\a}}
\def\RD{{_tD_b^\a}}
\def\LHI{{_a\mathcal{I}_t^\a}}
\def\RHI{{_t\mathcal{I}_b^\a}}
\def\LHD{{_a\mathcal{D}_t^\a}}
\def\RHD{{_t\mathcal{D}_b^\a}}
\def\LHDHz{{_1\mathcal{D}_t^{0.5}}}
\def\GLa{{^{GL}_{\phantom{1}a}D_t^\a}}
\def\GLb{{^{GL}_{\phantom{1}t}D_b^\a}}
\def\sGLa{{^{sGL}_{\phantom{1}a}D_t^\a}}
\def\w{\left(\omega_k^\a\right)}
\def\wh{\left(\omega_k^{0.5}\right)}
\begin{document}
\title{
{\begin{flushleft}
\vskip 0.45in
{\normalsize\bfseries\textit{Chapter~V}}
\end{flushleft}
\vskip 0.45in
\bfseries\scshape Numerical Approximations to Fractional Problems of the Calculus of Variations and Optimal Control}}
\author{\bfseries\itshape Shakoor Pooseh\thanks{E-mail address: spooseh@ua.pt}~, 
Ricardo Almeida\thanks{E-mail address: ricardo.almeida@ua.pt}
~and Delfim F. M. Torres\thanks{E-mail address: delfim@ua.pt}\\
Center for Research and Development in Mathematics and Applications (CIDMA)\\
Department of Mathematics, University of Aveiro, 3810-193 Aveiro, Portugal}
\date{}
\maketitle
\thispagestyle{empty}
\setcounter{page}{1}


\thispagestyle{fancy}
\fancyhead{}
\fancyhead[L]{This is a preprint of a paper whose final and definite form appeared in:
Chapter V, Fractional Calculus in Analysis, Dynamics and Optimal Control (Editor: Jacky Cresson),
Series: Mathematics Research Developments, Nova Science Publishers, New York, 2014.\\
{\footnotesize \url{http://www.novapublishers.com/catalog/product_info.php?products_id=46851}}}
\fancyhead[R]{}
\fancyfoot{}
\renewcommand{\headrulewidth}{0pt}


\noindent \textbf{Keywords:} fractional calculus of variations, fractional optimal control,
numerical methods, direct methods, indirect methods

\vspace{.08in} \noindent \textbf{AMS Subject Classification:} 49K05, 49M25, 26A33


\section{Introduction}

A fractional problem of the calculus of variations and optimal control consists
in the study of an optimization problem in which the objective functional
or constraints depend on derivatives and integrals of arbitrary,
real or complex, orders. This is a generalization of the classical theory,
where derivatives and integrals can only appear in integer orders.


\subsection{Preliminaries}

Integer order derivatives and integrals have a unified meaning in the literature.
In contrast, there are several different approaches and definitions in fractional
calculus for derivatives and integrals of arbitrary order. The following
definitions and notations will be used throughout this chapter. See \cite{Kilbas}.

\begin{definition}[\bf Gamma function]
The Euler integral of the second kind
$$
\Gamma(z)=\int_0^\infty t^{z-1}e^{-t}dt, \qquad Re(z)>0,
$$
is called the gamma function.
\end{definition}
The gamma function has an important property, $\Gamma(z+1)=z\Gamma(z)$,
and hence $\Gamma(z)=(z-1)!$ for $z\in\mathbb{N}$, which allows
to extend the notion of factorial to real numbers.
Other properties of this special function can be found in \cite{Andrews}.


\pagestyle{fancy}
\fancyhead{}
\fancyhead[EC]{Shakoor Pooseh, Ricardo Almeida, Delfim F. M. Torres}
\fancyhead[EL,OR]{\thepage}
\fancyhead[OC]{Numerical Approximations to Fractional Problems \dots}
\fancyfoot{}
\renewcommand\headrulewidth{0.5pt}


\begin{definition}[\bf Mittag--Leffler function]
Let $\a > 0$. The function $E_\a$ defined by
$$
E_\a(z)=\sum_{j=0}^\infty \frac{z^j}{\Gamma(\a j+1)},
$$
whenever the series converges, is called the one parameter Mittag--Leffler function.
The two-parameter Mittag--Leffler function with parameters $\a, \beta >0$ is defined by
\begin{equation}
\label{Mittag}
E_{\a,\beta}(z)=\sum_{j=0}^\infty \frac{z^j}{\Gamma(\a j+\beta)}.
\end{equation}
\end{definition}

\begin{definition}[\bf Gr\"{u}nwald--Letnikov derivative]
Let $0<\a<1$ and $\binom{\a}{k}$ be the generalization of binomial coefficients to real numbers.
\begin{itemize}
\item The left Gr\"{u}nwald--Letnikov fractional derivative is defined as
\begin{equation}
\label{LGLdef}
\GLa x(t)=\lim_{h\rightarrow 0^+} \frac{1}{h^\a} \sum_{k=0}^\infty(-1)^k\binom{\a}{k}x(t-kh).
\end{equation}
\item The right Gr\"{u}nwald--Letnikov derivative is
\begin{equation}
\label{RGLdef}
\GLb x(t)=\lim_{h\rightarrow 0^+} \frac{1}{h^\a} \sum_{k=0}^\infty(-1)^k\binom{\a}{k}x(t+kh).
\end{equation}
\end{itemize}
\end{definition}

In the above mentioned definitions, $\binom{\a}{k}$
is the generalization of binomial coefficients
to real numbers, defined by
$$
\binom{\a}{k}=\frac{\Gamma(\a+1)}{\Gamma(k+1)\Gamma(\a-k+1)}.
$$
In this relation, $k$ and $\a$ can be any integer, real or complex number,
except that $\a\notin\{-1,-2,-3,\ldots\}$.

\begin{definition}[\bf Riemann--Liouville fractional integral]
Let $x(\cdot)$ be an integrable function in $[a,b]$ and $\a>0$.
\begin{itemize}
\item The left Riemann--Liouville fractional integral of order $\alpha$ is given by
$$
\LI x(t)=\frac{1}{\Gamma(\alpha)}
\int_a^t (t-\tau)^{\alpha-1}x(\tau)d\tau,
\quad t\in [a,b].
$$
\item The right Riemann--Liouville fractional integral
of order $\alpha$ is given by
$$
\RI x(t)=\frac{1}{\Gamma(\alpha)}\int_t^b (\tau-t)^{\a-1}x(\tau)d\tau,
\quad t\in [a,b].
$$
\end{itemize}
\end{definition}

\begin{definition}[\bf Riemann--Liouville fractional derivative]
Let $x(\cdot)$ be an absolutely continuous function in $[a,b]$, $x(\cdot)\in AC[a,b]$, and $0\leq \a<1$.
\begin{itemize}
\item The left Riemann--Liouville fractional derivative of order $\alpha$ is given by
\begin{equation*}
\LD x(t)=\frac{1}{\Gamma(1-\alpha)}\frac{d}{dt}\int_a^t (t-\t)^{-\alpha}x(\t)d\t,
\quad t\in [a,b].
\end{equation*}
\item
The right Riemann--Liouville fractional derivative of order $\a$ is given by
$$
\RD x(t)=\frac{1}{\Gamma(1-\a)}\left (-\frac{d}{dt}\right)
\int_t^b (\t-t)^{-\a}x(\t)d\t, \quad t\in [a,b].
$$
\end{itemize}
\end{definition}

Another type of fractional derivatives, introduced by Caputo,
is closely related to the Riemann--Liouville definitions.

\begin{definition}[\bf Caputo's fractional derivative]
For a function $x(\cdot)\in AC[a,b]$ with $0\leq \a<1$:
\begin{itemize}
\item The left Caputo fractional derivative of order $\alpha$ is given by
\begin{equation*}
\LDC x(t)=\frac{1}{\Gamma(1-\alpha)}\int_a^t (t-\t)^{-\alpha}\dot{x}(\t)d\t,
\quad t\in [a,b].
\end{equation*}
\item
The right Caputo fractional derivative of order $\a$ is given by
$$
\RDC x(t)=\frac{-1}{\Gamma(1-\a)} \int_t^b (\t-t)^{-\a}\dot{x}(\t)d\t,
\quad t\in [a,b].
$$
\end{itemize}
\end{definition}

\begin{definition}[\bf Hadamard fractional integral]
Let $x:[a,b]\to\mathbb R$.
\begin{itemize}
\item The left Hadamard fractional integral of order $\a>0$ is defined by
$$
\LHI x(t)=\frac{1}{\Gamma(\alpha)}
\int_a^t \left(\ln\frac{t}{\tau}\right)^{\alpha-1}\frac{x(\tau)}{\tau}d\tau,
\quad t\in ]a,b[.
$$
\item The right Hadamard fractional integral of order $\a>0$ is defined by
$$
\RHI x(t)=\frac{1}{\Gamma(\alpha)}
\int_t^b \left(\ln\frac{\tau}{t}\right)^{\a-1}\frac{x(\tau)}{\tau}d\tau,
\quad t\in ]a,b[.
$$
\end{itemize}
\end{definition}
When $\a=m$ is an integer, these fractional integrals are \textit{m-fold} integrals:
$$
{_a\mathcal{I}_t^m} x(t)=\int_a^t \frac{d\tau_1}{\tau_1}\int_a^{\tau_1}
\frac{d\tau_2}{\tau_2}\ldots \int_a^{\tau_{m-1}} \frac{x(\tau_m)}{\tau_m}d\tau_m,
$$
and
$$
{_t\mathcal{I}_b^m}x(t)=\int_t^b \frac{d\tau_1}{\tau_1}\int_{\tau_1}^b
\frac{d\tau_2}{\tau_2}\ldots \int_{\tau_{m-1}}^b \frac{x(\tau_m)}{\tau_m}d\tau_m.
$$

\begin{definition}[\bf Hadamard fractional derivative]
For $\a>0$ and $n=[\a]+1$,
\begin{itemize}
\item The left Hadamard fractional derivative of order $\a$  is defined by
$$
\LHD x(t)=\left(t\frac{d}{dt}\right)^n\frac{1}{\Gamma(n-\alpha)}
\int_a^t \left(\ln\frac{t}{\tau}\right)^{n-\alpha-1}\frac{x(\tau)}{\tau}d\tau,
\quad t\in ]a,b[.
$$
\item The right Hadamard fractional derivative of order $\a$  is defined by
$$
\RHD x(t)=\left(-t\frac{d}{dt}\right)^n\frac{1}{\Gamma(n-\alpha)}
\int_t^b \left(\ln\frac{\tau}{t}\right)^{n-\a-1}\frac{x(\tau)}{\tau}d\tau,
\quad t\in ]a,b[.
$$
\end{itemize}
\end{definition}

When $\a=m$ is an integer, we have
$$
{_a\mathcal{D}_t^m} x(t)=\left(t\frac{d}{dt}\right)^m x(t)
\mbox{ and } {_t\mathcal{D}_b^m}x(t)=\left(-t\frac{d}{dt}\right)^m x(t).
$$


\subsection{Fractional Calculus of Variations and Optimal Control}

Many generalizations to the classical calculus of variations
and optimal control have been made to extend the theory to cover
fractional variational and fractional optimal control problems.
A simple fractional variational problem, for example, consists
in finding a function $x(\cdot)$ that minimizes the functional
\begin{equation}
\label{Functional}
J[x(\cdot)]=\int_a^b L(t, x(t), \LD x(t))dt,
\end{equation}
where $\LD$ is the left Riemann--Liouville fractional derivative.
Typically, some boundary conditions are prescribed as $x(a)=x_a$
and/or $x(b)=x_b$. Classical techniques have been adopted to solve such problems.
The Euler--Lagrange equation for a Lagrangian of the form $L(t, x(t), \LD x(t))$
has been derived firstly in \cite{Riewe,Riewe1}. Many variants of necessary conditions
of optimality have been studied. A generalization of the problem
to include fractional integrals, i.e., $L=L(t,{_aI_t^{1-\a}}x(t),\LD x(t))$,
the transversality conditions of fractional variational problems and many other aspects
can be found in the literature of recent years. See \cite{Agrawal,AlD,Atan}
and references therein. Furthermore, it has been shown that a variational problem
with fractional derivatives can be reduced to a classical problem using
an approximation of the Riemann--Liouville fractional derivatives in terms
of a finite sum, where only derivatives of integer order are present \cite{Atan}.

On the other hand, fractional optimal control problems usually appear in the form of
\begin{align*}
& J[x(\cdot)]=\int_a^b L(t, x(t), u(t))dt\rightarrow \min\\
& s.t.~ \left\{
\begin{array}{l}
\LD x(t)=f(t,x(t),u(t))\\
x(a)=x_a,~x(b)=x_b,
\end{array}
\right.
\end{align*}
where an optimal control $u(\cdot)$ together with an optimal trajectory $x(\cdot)$
are required to follow a fractional dynamic and, at the same time,
optimize an objective functional. Again, classical techniques are generalized
to derive necessary optimality conditions. Euler--Lagrange equations have been introduced,
\textrm{e.g.}, in \cite{Agrawal1}. A Hamiltonian formalism for fractional optimal control
problems can be found in \cite{Ozlem} that exactly follows the same procedure
of the regular optimal control theory, i.e., those with only integer-order derivatives.

Due to the growing number of applications of fractional calculus in science and engineering
(see, \textrm{e.g.}, \cite{Das,Kai,Machado,Machado1}), numerical methods are being developed
to provide tools for solving such problems. Using the Gr\"{u}nwald--Letnikov approach,
it is convenient to approximate the fractional differentiation operator, $D^\a$,
by generalized finite differences. In \cite{Podlubny} some problems have been solved
by this approximation. In \cite{Kai1} a predictor-corrector method is
presented that converts an initial value problem into an equivalent Volterra integral equation,
while \cite{Agrawal2} shows the use of numerical methods to solve such integral equations.
A good survey on numerical methods for fractional differential equations can be found in \cite{Ford}.

A numerical scheme to solve fractional differential equations has been introduced
in \cite{Atan0,Atan2}, and \cite{Jelicic}, making an adaptation, uses this technique
to solve fractional optimal control problems. The scheme is based on an expansion formula
to approximate the Riemann--Liouville fractional derivative. The approximations transform
fractional derivatives into finite sums containing only derivatives of integer order.

In this chapter, we try to analyze problems for which an analytic solution is available.
This approach gives us the ability of measuring the accuracy of each method. To this end,
we need to measure how close we get to the exact solutions. We use the $L^2$-norm
and define the error function $E[x(\cdot),\tilde{x}(\cdot)]$ by
$$
E=\|x(\cdot)-\tilde{x}(\cdot)\|_2
=\left(\int_a^b [x(t)-\tilde{x}(t)]^2dt\right)^{\frac{1}{2}},
$$
where $x(\cdot)$ is defined on $[a,b]$.


\subsection{A General Formulation}

The appearance of fractional terms of different types, derivatives and integrals,
and the fact that there are several definitions for such operators,
makes it difficult to present a typical problem to represent all possibilities.
Nevertheless, one can consider the optimization of functionals of the form
\begin{equation}
\label{GenForm}
J[\x(\cdot)]=\int_a^b L(t, \x(t), D^\ba \x(t))dt
\end{equation}
that depends on a fractional derivative, $D^\ba$, in which $\x=(x_1,x_2,\ldots,x_n)$,
$\ba=(\a_1,\a_2,\ldots,\a_n)$ and $\a_i$, $i=1,2,\ldots,n$,
are arbitrary real positive numbers. The problem can be with
or without boundary conditions. Many settings of fractional variational
and optimal control problems can be transformed to the optimization
of \eqref{GenForm}. Constraints that usually appear in the calculus
of variations and are always present in optimal control problems
can be included in the functional using Lagrange multipliers.
More precisely, in presence of dynamic constraints as fractional
differential equations, we assume that it is possible to transform
such equations to a vector fractional differential equation of the form
\begin{equation*}
D^\ba \x(t)=f(t, \x(t)).
\end{equation*}
In this stage, we introduce a new variable
$\l=(\lambda_1,\lambda_2,\ldots,\lambda_n)$ and consider the optimization of
\begin{equation*}
J[\x(\cdot)]=\int_a^b \left[L(t, \x(t), D^\ba \x(t))
+\l(t)D^\ba \x(t)-\l(t)f(t, \x(t))\right]dt.
\end{equation*}

When the problem depends on fractional integrals, $I^\a$,
a new variable can be defined as $z(t)=I^\a x(t)$.
Recall that $D^\a I^\a x=x$ (see, \textrm{e.g.}, \cite{Kilbas}).
The equation
\begin{equation*}
D^\a z(t)=D^\a I^\a x(t)=x(t)
\end{equation*}
can be regarded as an extra constraint to be added to the original problem.
However, problems containing fractional integrals can be treated directly
to avoid the complexity of adding an extra variable to the original problem.
Interested readers are addressed to \cite{AlD,PATFracInt}.

Throughout this chapter, by a fractional variational problem,
we mainly consider the following one-variable problem with given boundary conditions:
\begin{align*}
& J[x(\cdot)]=\int_a^b L(t, x(t), D^\a x(t))dt\rightarrow \min\\
& s.t.~ \left\{
\begin{array}{l}
x(a)=x_a,\\
x(b)=x_b.
\end{array}
\right.
\end{align*}
In this setting $D^\a$ can be replaced by any fractional operator that is available in the literature,
say, Riemann--Liouville, Caputo, Gr\"{u}nwald--Letnikov, Hadamard and so forth. The inclusion
of constraints is done by Lagrange multipliers. The transition from this problem to the general one,
equation \eqref{GenForm}, is straightforward and is not discussed here.


\subsection{Solution Methods}

There are two main approaches to solve variational, including optimal control, problems.
On the one hand, there are direct methods. In a branch of direct methods, the problem
is discretized over a mesh on the interested time interval. Discrete values of the unknown
function on mesh points, finite differences for derivatives, and, finally, a quadrature
rule for the integral, are used. This procedure reduces the variational problem,
a continuous dynamic optimization problem, to static multi-variable optimization. Better accuracies
are achieved by refining the underlying mesh size. Another class of direct methods uses
function approximation through a linear combination of the elements of a certain basis,
\textrm{e.g.}, power series. The problem is then transformed into the determination
of the unknown coefficients. To get better results in this sense, is the matter
of using more adequate or higher order function approximations.

On the other hand, there are indirect methods that reduce a variational problem
to the solution of a differential equation by applying some necessary optimality conditions.
Euler--Lagrange equations and Pontryagin's maximum principle are used, in this context,
to make the transformation process. Once we solve the resulting differential equation,
an extremal for the original problem is reached. Therefore, to reach better results
using indirect methods, one has to employ powerful integrators. It is worth, however,
to mention here that numerical methods are usually used to solve practical problems.

These two methods have been generalized to cover fractional problems,
which is the essential subject of this chapter.


\section{Expansion Formulas to Approximate Fractional Derivatives}

This section is devoted to present two approximations for the Riemann--Liouville,
Caputo and Hadamard derivatives that are referred as fractional operators afterwards.
We introduce the expansions of fractional operators in terms
of infinite sums involving only integer order derivatives. These expansions
are then used to approximate fractional operators in problems
of the fractional calculus of variations and fractional optimal control.
In this way, one can transform such problems into classical variational
or optimal control problems. Hereafter, a suitable method, that can be found
in the classical literature, is employed to find an approximated solution
for the original fractional problem. Here we focus mainly on the left derivatives
and the details of extracting corresponding expansions for right derivatives
are given whenever it is needed to apply new techniques.


\subsection{Riemann--Liouville Derivative}

\subsubsection{ Approximation by a Sum of Integer Order Derivatives}

Recall the definition of the left Riemann--Liouville derivative for $\a\in(0,1)$,
\begin{equation}
\label{LeftD}
\LDa x(t)=\frac{1}{\Gamma(1-\a)}\frac{d}{dt}\int_a^t (t-\t)^{-\a}x(\t)d\t.
\end{equation}
The following theorem holds for any function $x(\cdot)$
that is analytic in an interval $(c,d)\supset[a,b]$.
See \cite{Atan} for a more detailed discussion
and \cite{Samko} for a different proof.

\begin{thm}
\label{ThmIntExp}
Let $(c,d)$, $-\infty<c<d<+\infty$, be an open interval in $\mathbb R$,
and $[a,b]\subset(c,d)$ be such that for each $t\in[a,b]$ the closed ball
$B_{b-a}(t)$, with center at $t$ and radius $b-a$, lies in $(c,d)$.
If $x(\cdot)$ is analytic in $(c,d)$, then
\begin{equation}
\label{ExpIntInf}
\LDa x(t)=\sum_{k=0}^{\infty}\frac{(-1)^{k-1}\a
x^{(k)}(t)}{k!(k-\a)\Gamma(1-\a)}(t-a)^{k-\a}.
\end{equation}
\end{thm}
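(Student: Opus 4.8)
The plan is to start from the definition \eqref{LeftD} and reduce the whole statement to a power-series computation. First I would change variables in the integral by setting $u=t-\t$, which turns $\int_a^t (t-\t)^{-\a}x(\t)\,d\t$ into $\int_0^{t-a} u^{-\a}x(t-u)\,du$. Since $x$ is analytic on $(c,d)$ and, by the ball hypothesis, the segment $[a,t]\subset B_{b-a}(t)\subset(c,d)$, the Taylor expansion of $x$ about $t$,
$$
x(t-u)=\sum_{k=0}^\infty \frac{(-1)^k x^{(k)}(t)}{k!}u^k,
$$
converges for every $u\in[0,t-a]$; this is precisely what the radius $b-a$ is designed to guarantee, since $t-a\le b-a$. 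Substituting this series and integrating term by term against the integrable weight $u^{-\a}$ yields
$$
\int_a^t (t-\t)^{-\a}x(\t)\,d\t=\sum_{k=0}^\infty \frac{(-1)^k x^{(k)}(t)}{k!\,(k-\a+1)}(t-a)^{k-\a+1}.
$$

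Next I would differentiate this identity in $t$ and apply the prefactor $1/\Gamma(1-\a)$ from \eqref{LeftD}. Differentiating term by term, each summand splits via the product rule into a piece from $x^{(k)}(t)$ and a piece from $(t-a)^{k-\a+1}$:
$$
\frac{d}{dt}\Bigl[x^{(k)}(t)(t-a)^{k-\a+1}\Bigr]
=x^{(k+1)}(t)(t-a)^{k-\a+1}+(k-\a+1)\,x^{(k)}(t)(t-a)^{k-\a}.
$$
Collecting the two resulting sums and shifting the index $k\mapsto k-1$ in the first one, the coefficient of $x^{(k)}(t)(t-a)^{k-\a}$ for $k\ge 1$ becomes $(-1)^{k-1}$ times $\bigl[\tfrac{1}{(k-1)!(k-\a)}-\tfrac{1}{k!}\bigr]=\tfrac{\a}{k!(k-\a)}$, while the leftover $k=0$ contribution $x(t)(t-a)^{-\a}$ is exactly the $k=0$ term of \eqref{ExpIntInf} (there $(-1)^{-1}\a/(0-\a)=1$). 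This algebraic recombination produces \eqref{ExpIntInf} after multiplication by $1/\Gamma(1-\a)$.

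The change of variables and the coefficient bookkeeping are routine; the step requiring genuine care is the double interchange of the infinite sum first with the integral and then with the $t$-derivative. I would justify both by uniform convergence on compact subsets: the ball hypothesis forces the radius of convergence of the Taylor expansion about each $t$ to be at least $b-a\ge t-a$, and Cauchy-type bounds on the derivatives $x^{(k)}(t)$ then make the integrated and the differentiated series converge uniformly for $t$ in compact subsets of $(a,b]$. Note that \eqref{ExpIntInf} is singular at $t=a$ through the factor $(t-a)^{-\a}$, so the differentiation must be carried out on $(a,b]$ rather than at the left endpoint. Thus the main obstacle is purely the analytic bookkeeping needed to license term-by-term integration and differentiation, not any conceptual difficulty.
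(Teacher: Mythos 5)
Your proposal is correct and follows essentially the same route as the paper: Taylor expansion of $x$ about $t$, term-by-term integration of the series against the kernel, term-by-term differentiation via the product rule, and the same index shift giving the coefficient $\frac{(-1)^{k-1}\a}{k!(k-\a)}$. The change of variables $u=t-\t$ is cosmetic, and your extra care in justifying the two interchanges (which the paper dismisses with a one-line appeal to analyticity) only strengthens the argument.
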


\begin{proof}
Since $x(t)$ is analytic in $(c,d)$, and $B_{b-a}(t)\subset(c,d)$
for any $\t\in(a,t)$ with $t\in(a,b)$, the Taylor expansion
of $x(\t)$ at $t$ is a convergent power series, \textrm{i.e.},
$$
x(\t)=x(t-(t-\t))=\sum_{k=0}^{\infty}\frac{(-1)^k x^{(k)}(t)}{k!}(t-\t)^k,
$$
and then, by \eqref{LeftD},
\begin{equation}
\label{ExpIntErr}
\LDa x(t)=\frac{1}{\Gamma(1-\a)}\frac{d}{dt}
\int_a^t \left((t-\t)^{-\a}\sum_{k=0}^{\infty}\frac{(-1)^k x^{(k)}(t)}{k!}(t-\t)^k\right)d\t.
\end{equation}
Since $(t-\t)^{k-\a} x^{(k)}(t)$ is analytic, we can interchange integration with summation, so
\begin{eqnarray*}
\LDa x(t)&=& \frac{1}{\Gamma(1-\a)}\frac{d}{dt}\left(
\sum_{k=0}^{\infty}\frac{(-1)^k  x^{(k)}(t)}{k!}\int_a^t (t-\t)^{k-\a}d\t\right)\\
&=& \frac{1}{\Gamma(1-\a)}\frac{d}{dt}\sum_{k=0}^{\infty}\left(
\frac{(-1)^k x^{(k)}(t)}{k!(k+1-\a)}(t-a)^{k+1-\a}\right)\\
&=&\frac{1}{\Gamma(1-\a)}\sum_{k=0}^{\infty}\left(
\frac{(-1)^k x^{(k+1)}(t)}{k!(k+1-\a)}(t-a)^{k+1-\a}
+\frac{(-1)^k x^{(k)}(t)}{k!}(t-a)^{k-\a}\right)\\
&=&\frac{x(t)}{\Gamma(1-\a)}(t-a)^{-\a}\\
&&+\frac{1}{\Gamma(1-\a)}\sum_{k=1}^\infty \left(\frac{(-1)^{k-1}}{(k-\a)(k-1)!}
+\frac{(-1)^k}{k!}\right)x^{(k)}(t)(t-a)^{k-\a}.
\end{eqnarray*}
Observe that
\begin{eqnarray*}
\frac{(-1)^{k-1}}{(k-\a)(k-1)!}+\frac{(-1)^k}{k!}
&=&\frac{k(-1)^{k-1}+k(-1)^k-\a(-1)^k}{(k-\a)k!}\\
&=&\frac{(-1)^{k-1}\a}{(k-\a)k!},
\end{eqnarray*}
since for any $k=0,1,2,\ldots$ we have $k(-1)^{k-1}+k(-1)^{k}=0$.
Therefore, the expansion formula is reached as required.
\end{proof}

For numerical purposes, a finite number of terms in \eqref{ExpIntInf} is used and one has
\begin{equation}
\label{expanInt}
\LDa x(t)\approx\sum_{k=0}^{N}\frac{(-1)^{k-1}\a x^{(k)}(t)}{k!(k-\a)\Gamma(1-\a)}(t-a)^{k-\a}.
\end{equation}

\begin{remark}
With the same assumptions of Theorem~\ref{ThmIntExp}, we can expand $x(\t)$ at $t$,
$$
x(\t)=x(t+(\t-t))=\sum_{k=0}^{\infty}\frac{x^{(k)}(t)}{k!}(\t-t)^k,
$$
where $\t\in(t,b)$. Similar calculations result in the following
approximation for the right Riemann--Liouville derivative:
\begin{equation*}
\RD x(t)\approx\sum_{k=0}^{N}\frac{-\a x^{(k)}(t)}{k!(k-\a)\Gamma(1-\a)}(b-t)^{k-\a}.
\end{equation*}
\end{remark}

A proof for this expansion is available at \cite{Samko} that uses a similar relation
for fractional integrals. The proof discussed here, however,
allows to extract an error term for this expansion easily.


\subsubsection{Approximation Using Moments of a Function}
\label{secexpan}

By moments of a function, we have no physical or distributive sense in mind.
The naming comes from the fact that, during expansion, the terms of the form
\begin{equation}
\label{defVp}
V_p(t):=V_p(x(t))=(1-p)\int_a^t (\t-a)^{p-2}x(\t) d\t,
\quad p\in\mathbb{N}, \quad \t\geq a,
\end{equation}
resemble the formulas of central moments (\textrm{cf.} \cite{Atan2}).
We assume that $V_p(x(\cdot))$, $p\in \mathbb{N}$,
denotes the $(p-2)$th moment of a function $x(\cdot)\in AC^2[a,b]$.

The following lemma, that is given here without a proof, is the key relation
to extract an expansion formula for Riemann--Liouville derivatives.

\begin{lemma}[\bf \textrm{cf.} Lemma 2.12 of \cite{Kai}]
\label{LemRL}
Let $x(\cdot)\in AC[a,b]$ and $0 < \a < 1$.
Then the left Riemann--Liouville fractional derivative,
$\LDa x(\cdot)$, exists almost everywhere in $[a,b]$.
Moreover, $\LDa x(\cdot)\in L_p[a,b]$ for $1\leq p<\frac{1}{\a}$ and
\begin{equation}
\label{DecThm}
\LDa x(t)=\frac{1}{\Gamma(1-\a)}\left[ \frac{x(a)}{(t-a)^\a}
+\int_a^t (t-\tau)^{-\a}\dot{x}(\tau)d\tau \right], \qquad t\in (a,b).
\end{equation}
The same argument is valid for the right Riemann--Liouville derivative and
\begin{equation*}
\RD x(t)=\frac{1}{\Gamma(1-\a)}\left[ \frac{x(b)}{(b-t)^\a}
-\int_t^b (\tau-t)^{-\a}\dot{x}(\tau)d\tau \right], \qquad t\in (a,b).
\end{equation*}
\end{lemma}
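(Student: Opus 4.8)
The plan is to prove formula \eqref{DecThm} by starting from the Riemann--Liouville definition \eqref{LeftD} and integrating by parts, converting the derivative of the convolution integral into the decomposition with a boundary term plus an integral against $\dot x$. The statement about existence almost everywhere and $L_p$-membership is the content of the cited Lemma 2.12 of \cite{Kai}, so I would quote that part and concentrate the argument on establishing the explicit representation \eqref{DecThm}, which is what the subsequent expansion formulas actually use.

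First I would write the inner integral in \eqref{LeftD} as $\int_a^t (t-\t)^{-\a}x(\t)\,d\t$ and integrate by parts, choosing $u=x(\t)$ and $dv=(t-\t)^{-\a}d\t$ so that $v=-\frac{1}{1-\a}(t-\t)^{1-\a}$. Because $x(\cdot)\in AC[a,b]$, this is legitimate and $\dot x$ exists almost everywhere with $x(t)-x(a)=\int_a^t\dot x(\t)\,d\t$. Integration by parts yields
\begin{equation*}
\int_a^t (t-\t)^{-\a}x(\t)\,d\t
=\frac{1}{1-\a}(t-a)^{1-\a}x(a)
+\frac{1}{1-\a}\int_a^t (t-\t)^{1-\a}\dot{x}(\t)\,d\t ,
\end{equation*}
where the boundary contribution at $\t=t$ vanishes since $(t-\t)^{1-\a}\to 0$ there (as $0<\a<1$) and at $\t=a$ it produces the displayed term.

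Next I would differentiate this expression in $t$. The first term differentiates directly to $\frac{1}{\Gamma(1-\a)}\cdot x(a)(t-a)^{-\a}$ after multiplying by the prefactor $\frac{1}{\Gamma(1-\a)}$. For the second term, the integrand $(t-\t)^{1-\a}\dot x(\t)$ together with the factor $\frac{1}{1-\a}$ must be differentiated under the integral sign; the upper-limit contribution again vanishes because $(t-\t)^{1-\a}|_{\t=t}=0$, and $\frac{d}{dt}(t-\t)^{1-\a}=(1-\a)(t-\t)^{-\a}$ cancels the $\frac{1}{1-\a}$, leaving $\int_a^t (t-\t)^{-\a}\dot x(\t)\,d\t$. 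Collecting the two pieces gives exactly \eqref{DecThm}.

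The main obstacle is the differentiation under the integral sign in the second step: since $\dot x$ is only $L^1$ and $(t-\t)^{-\a}$ is singular at $\t=t$, one cannot naively apply Leibniz's rule. I would justify it by recognizing $\int_a^t (t-\t)^{1-\a}\dot x(\t)\,d\t$ as (a constant multiple of) the Riemann--Liouville integral $\LI$ applied to $\dot x$ with order $2-\a>1$, whose derivative is the lower-order fractional integral of $\dot x$ by the known differentiation property of fractional integrals. Alternatively one can apply the Lebesgue differentiation theorem after writing the antiderivative explicitly. The right-hand version follows by the symmetric computation, integrating by parts with $v=\frac{1}{1-\a}(\t-t)^{1-\a}$ and tracking the sign from the operator $-\frac{d}{dt}$, which produces the minus sign in front of the integral and the boundary term $x(b)(b-t)^{-\a}$.
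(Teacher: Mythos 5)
The paper does not actually prove this lemma: it states it ``without a proof'' and attributes it to Lemma~2.12 of Diethelm's book \cite{Kai}, so there is no in-paper argument to compare yours against. Your proposal is a correct, self-contained derivation of the representation \eqref{DecThm}, and it is essentially the standard proof of this result. The integration by parts is legitimate because both $x(\cdot)$ and $\t\mapsto -\frac{1}{1-\a}(t-\t)^{1-\a}$ are absolutely continuous on $[a,t]$, and you correctly identify the one genuinely delicate step: differentiating $\int_a^t(t-\t)^{1-\a}\dot{x}(\t)\,d\t$ in $t$ when $\dot{x}$ is merely $L^1$ and the differentiated kernel is singular. Your fix is the right one: this integral is $\Gamma(2-\a)\,{_aI_t^{2-\a}}\dot{x}(t)$, and since $2-\a>1$ the semigroup property gives ${_aI_t^{2-\a}}\dot{x}={_aI_t^{1}}\bigl({_aI_t^{1-\a}}\dot{x}\bigr)$, whose derivative is ${_aI_t^{1-\a}}\dot{x}$ almost everywhere by Lebesgue differentiation. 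This is in fact how the cited source argues, except that Diethelm reaches the same two ingredients by writing $x(t)=x(a)+\int_a^t\dot{x}(\t)\,d\t$ inside ${_aI_t^{1-\a}}$ rather than by integrating by parts; the two routes are interchangeable. Delegating the a.e.\ existence and the $L_p$ membership ($1\le p<\frac{1}{\a}$) to the citation is reasonable, although your own argument nearly yields them for free: the decomposition exhibits ${_aI_t^{1-\a}}x$ as an explicit power function plus an absolutely continuous function, and $(t-a)^{-\a}$ together with the convolution ${_aI_t^{1-\a}}\dot{x}$ of an $L^1$ function with the kernel $s^{-\a}\in L_p$ lies in $L_p$ exactly when $p\a<1$. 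The right-derivative computation and its sign bookkeeping are also correct.
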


\begin{thm}[\bf \textrm{cf.} \cite{Atan0}]
With the same assumptions of Lemma~\ref{LemRL},
the left Riemann--Liouville derivative can be expanded as
\begin{multline}
\label{expanMomInf}
\LDa x(t)=\frac{(t-a)^{-\a}}{\Gamma(1-\a)} x(t)+B(\a)(t-a)^{1-\a}\dot{x}(t)\\
-\sum_{p=2}^{\infty} \left[C(\a,p)(t-a)^{1-p-\a}V_p(t)
- \frac{\Gamma(p-1+\a)}{\Gamma(\a)\Gamma(1-\a)(p-1)!}(t-a)^{-\a} x(t)\right],
\end{multline}
where $V_p(t)$ is defined by \eqref{defVp} and
\begin{eqnarray*}
B(\a)&=&\frac{1}{\Gamma(2-\a)}\left(1
+\sum_{p=1}^{\infty}\frac{\Gamma(p-1+\a)}{\Gamma(\a-1)p!}\right),\\
C(\a,p)&=&\frac{1}{\Gamma(2-\a)\Gamma(\a-1)}\frac{\Gamma(p-1+\a)}{(p-1)!}.
\end{eqnarray*}
\end{thm}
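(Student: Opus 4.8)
The plan is to start from the representation \eqref{DecThm} furnished by Lemma~\ref{LemRL},
\[
\LDa x(t)=\frac{1}{\Gamma(1-\a)}\left[\frac{x(a)}{(t-a)^\a}+\int_a^t (t-\t)^{-\a}\dot{x}(\t)\,d\t\right],
\]
and to expand the kernel of the remaining integral. First I would factor $(t-\t)^{-\a}=(t-a)^{-\a}\bigl(1-\tfrac{\t-a}{t-a}\bigr)^{-\a}$ and apply the generalized binomial (Newton) series, obtaining
\[
(t-\t)^{-\a}=\sum_{p=0}^{\infty}\frac{\Gamma(p+\a)}{\Gamma(\a)\,p!}\,(t-a)^{-\a-p}(\t-a)^p .
\]
Interchanging summation and integration reduces everything to the integrals $\int_a^t(\t-a)^p\dot{x}(\t)\,d\t$, and a single integration by parts turns each of these into the boundary term $(t-a)^p x(t)$ plus a multiple of $\int_a^t(\t-a)^{p-1}x(\t)\,d\t$, which is precisely $V_{p+1}(t)$ up to the factor in \eqref{defVp}. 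This is the mechanism that brings the moments into the formula.

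Once the integrals are written through $x(t)$ and the $V_p(t)$, the remaining work is bookkeeping on the Gamma coefficients. I would shift the summation index $p\mapsto p-1$ to align the moment sum with the range $p\ge 2$ in \eqref{expanMomInf}, verify that the two contributions proportional to $x(a)$ cancel, and then simplify the coefficients with the functional equation $\Gamma(z+1)=z\Gamma(z)$. In particular, the identities $\Gamma(2-\a)=(1-\a)\Gamma(1-\a)$ and $\Gamma(\a)=(\a-1)\Gamma(\a-1)$ are exactly what convert the raw binomial coefficients into $C(\a,p)$ and into the leading coefficient $\tfrac{(t-a)^{-\a}}{\Gamma(1-\a)}$ of $x(t)$, while collecting the terms surviving at order $(t-a)^{1-\a}$ produces the constant $B(\a)$ and the $\dot{x}(t)$ contribution.

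The hard part will be convergence and the legitimacy of the term-by-term integration, not the algebra. The binomial series for $(t-\t)^{-\a}$ converges only for $\t<t$ and diverges at the upper limit $\t=t$, where $\tfrac{\t-a}{t-a}=1$ and the coefficients $\tfrac{\Gamma(p+\a)}{\Gamma(\a)p!}\sim p^{\a-1}$ are not summable, so one cannot invoke uniform convergence on all of $[a,t]$, and the naive split of the expansion into a pure $x(t)$ series and a pure moment series yields two individually divergent sums. The resolution is to keep the expansion in the compensated form of \eqref{expanMomInf}, where each bracket pairs a moment term with an $x(t)$ counterterm: after substituting the leading asymptotics of $V_p(t)$ the divergent $x(t)$ pieces cancel inside each bracket, leaving summands that decay like $p^{\a-2}$, so the series converges. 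Equivalently, integrating by parts once before expanding raises the kernel exponent to $1-\a>0$ and replaces the coefficients by $\tfrac{\Gamma(p-1+\a)}{\Gamma(\a-1)p!}$ — precisely those appearing in $B(\a)$ and $C(\a,p)$ — whose series is now summable up to the endpoint; this is also where the hypotheses $0<\a<1$ and $x\in AC^2[a,b]$ (so that the moment integrals and the endpoint behaviour are controlled) are genuinely used. Justifying this regrouping, rather than the coefficient manipulation, is the real content of the proof.
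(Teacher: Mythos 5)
Your proposal is correct in substance, and its closing ``equivalently'' variant --- integrate by parts once so the kernel becomes $(t-\t)^{1-\a}$, then expand --- is exactly the paper's proof; your primary route, however, reverses the order of operations, and the comparison is instructive. The paper starts from \eqref{DecThm}, integrates by parts to get $\frac{1}{\Gamma(2-\a)}\int_a^t(t-\t)^{1-\a}\ddot{x}(\t)\,d\t$, and only then applies the binomial theorem: the coefficients $\frac{\Gamma(p-1+\a)}{\Gamma(\a-1)p!}=O(p^{\a-2})$ are then absolutely summable, the series converges uniformly on $[a,t]$, the interchange of sum and integral is routine, and two further integrations by parts per term generate both the moments $V_p$ and the $\dot{x}(t)$ boundary terms whose sum is $B(\a)(t-a)^{1-\a}\dot{x}(t)$. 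You expand first, with exponent $-\a$, so your coefficients are $O(p^{\a-1})$ and not summable; as you correctly note, the interchange must then be justified differently (the coefficients are positive, so monotone convergence applied to the positive and negative parts of $\dot{x}$ does it), and the result is only meaningful in the compensated, bracketed form of \eqref{expanMomInf}. Your route is shorter (one integration by parts instead of three) and works under $x\in AC$ alone, closer to the stated hypotheses of Lemma~\ref{LemRL} than the paper's implicit use of $\ddot{x}$. But note one wrinkle: expanding first, no $\dot{x}(t)$ term ever appears, so your claim that collecting the order-$(t-a)^{1-\a}$ terms ``produces the constant $B(\a)$ and the $\dot{x}(t)$ contribution'' is true only of the integrate-first variant. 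To obtain \eqref{expanMomInf} literally as stated from your primary route, you must add the observation that $B(\a)=0$, i.e.\ $\sum_{p=1}^{\infty}\frac{\Gamma(p-1+\a)}{\Gamma(\a-1)p!}=-1$ (the binomial series for $(1-u)^{1-\a}$ evaluated at $u=1$; cf.\ Remark~\ref{BnoB}), so that the term $B(\a)(t-a)^{1-\a}\dot{x}(t)$ can be inserted with zero value --- a one-line fix, not a gap, but worth making explicit, since this is precisely the term that does not vanish after truncation and is the reason the theorem is stated in this form.
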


\begin{proof}
Integration by parts on the right-hand-side of \eqref{DecThm} gives
\begin{multline}
\label{exp2}
\LD x(t)=\frac{x(a)}{\Gamma(1-\a)}(t-a)^{-\a}
+\frac{\dot{x}(a)}{\Gamma(2-\a)}(t-a)^{1-\a}\\
+\frac{1}{\Gamma(2-\a)}\int_a^t (t-\t)^{1-\a}\ddot{x}(\t)d\t.
\end{multline}
Since $(\t-a)\leq(t-a)$,
$$
(t-\t)^{1-\a}=(t-a)^{1-\a}\left(1-\frac{\t-a}{t-a}\right)^{1-\a}.
$$
Using the binomial theorem, we have
$$
\left(1-\frac{\t-a}{t-a}\right)^{1-\a}
=\sum_{p=0}^{\infty}\frac{\Gamma(p-1+\a)}{\Gamma(\a-1)p!}\left(\frac{\t-a}{t-a}\right)^p,
$$
in which the infinite series converges. Replacing for $(t-\t)^{1-\a}$ in \eqref{exp2} gives
\begin{eqnarray*}
\LDa x(t)&=&\frac{x(a)}{\Gamma(1-\a)}(t-a)^{-\a}+\frac{\dot{x}(a)}{\Gamma(2-\a)}(t-a)^{1-\a}\\
& &+\frac{(t-a)^{1-\a}}{\Gamma(2-\a)}\int_a^t\left(\sum_{p=0}^{\infty}\frac{\Gamma(p-1+\a)}{\Gamma(\a-1)p!}
\left(\frac{\t-a}{t-a}\right)^p\right)\ddot{x}(\t)d\t, \quad t>a.
\end{eqnarray*}
Interchanging the summation and integration operations is possible, and yields
\begin{eqnarray*}
\LDa x(t)&=&\frac{x(a)}{\Gamma(1-\a)}(t-a)^{-\a}
+\frac{\dot{x}(a)}{\Gamma(2-\a)}(t-a)^{1-\a}\\
& &+\frac{(t-a)^{1-\a}}{\Gamma(2-\a)}\sum_{p=0}^{\infty}
\frac{\Gamma(p-1+\a)}{\Gamma(\a-1)p!(t-a)^p}\int_a^t
(\t-a)^p\ddot{x}(\t)d\t, \quad t>a.
\end{eqnarray*}
Decomposing the infinite sum, integrating,
and doing another integration by parts, allow us to write
\begin{eqnarray*}
\LDa x(t)&=&\frac{x(a)}{\Gamma(1-\a)}(t-a)^{-\a}
+\frac{\dot{x}(a)}{\Gamma(2-\a)}(t-a)^{1-\a}
+\frac{(t-a)^{1-\a}}{\Gamma(2-\a)}\int_a^t \ddot{x}(\t)d\t \\
& &+\frac{(t-a)^{1-\a}}{\Gamma(2-\a)}\sum_{p=1}^{\infty}\frac{\gamma(\a,p)}{p!(t-a)^p}
\left[(t-a)^p\dot{x}(t)-p\int_a^t(\t-a)^{p-1}\dot{x}(\t)d\t\right]\\
&=&\frac{x(a)}{\Gamma(1-\a)}(t-a)^{-\a}+\frac{\dot{x}(t)}{\Gamma(2-\a)}(t-a)^{1-\a}
+\frac{(t-a)^{1-\a}}{\Gamma(2-\a)}\sum_{p=1}^{\infty}
\frac{\gamma(\a,p)}{p!}\dot{x}(t)\\
& &+\frac{(t-a)^{1-\a}}{\Gamma(2-\a)}\sum_{p=1}^{\infty}\frac{\gamma(\a,p)}{(p-1)!(t-a)^p}
\int_a^t(\t-a)^{p-1}\dot{x}(\t)d\t,
\end{eqnarray*}
where
$$
\gamma(\a,p)=\frac{\Gamma(p-1+\a)}{\Gamma(\a-1)}.
$$
Repeating this procedure again, and simplifying the results, ends the proof.
\end{proof}

The moments $V_p(t)$, $p=2,3,\ldots$, can be regarded as the solutions
to the following system of differential equations:
\begin{equation}
\label{sysVp}
\left\{
\begin{array}{l}
\dot{V}_p(t)=(1-p)(t-a)^{p-2}x(t)\\
V_p(a)=0, \qquad p=2,3,\ldots
\end{array}
\right.
\end{equation}

As before, a numerical approximation is achieved by taking only a finite number
of terms in the series \eqref{expanMomInf}. We approximate the fractional derivative as
\begin{equation}
\label{expanMom}
\LDa x(t)\approx A(t-a)^{-\a}x(t)+B(t-a)^{1-\a}\dot{x}(t)-\sum_{p=2}^N C(\a,p)(t-a)^{1-p-\a}V_p(t),
\end{equation}
where $A=A(\a,N)$ and $B=B(\a,N)$ are given by
\begin{eqnarray}
A(\a,N)&=&\frac{1}{\Gamma(1-\a)}\left(1
+\sum_{p=2}^N\frac{\Gamma(p-1+\a)}{\Gamma(\a)(p-1)!}\right),\label{A}\\
B(\a,N)&=&\frac{1}{\Gamma(2-\a)}\left(1
+\sum_{p=1}^N\frac{\Gamma(p-1+\a)}{\Gamma(\a-1)p!}\right)\label{B}.
\end{eqnarray}

\begin{remark}
\label{BnoB}
This expansion has been proposed in \cite{Djordjevic} and a simplification
has been made in \cite{Atan2}, which uses the fact that the infinite series
$\sum_{p=1}^{\infty}\frac{\Gamma(p-1+\a)}{\Gamma(\a-1)p!}$
tends to $-1$, and concludes that $B(\a)=0$, and thus
\begin{equation}
\label{expanAtan}
\LD x(t)\approx A(\a,N)t^{-\a}x(t)-\sum_{p=2}^N C(\a,p)t^{1-p-\a}V_p(t).
\end{equation}
In practice, however, we only use a finite number of terms in series. Therefore,
\begin{equation*}
1+\sum_{p=1}^N\frac{\Gamma(p-1+\a)}{\Gamma(\a-1)p!}\neq 0,
\end{equation*}
and we keep here the approximation in the form of equation \eqref{expanMom},
\cite{APTIndInt}. To be more precise, the values of $B(\a,N)$,
for different choices of $N$ and $\a$, are given in Table~\ref{tab}.
It shows that even for a large $N$, when $\a$ tends to one, $B(\a,N)$ cannot be ignored.
\end{remark}

\begin{table}[!ht]
\caption{$B(\a,N)$ for different values of  $\a$ and $N$.}
\centering
\begin{tabular}{|c|c c c c c c c|}
\hline
$N$         &    4   &    7   &   15   &   30   &   70   &  120   &  170   \\
\hline
$B(0.1,N)$  & 0.0310 & 0.0188 & 0.0095 & 0.0051 & 0.0024 & 0.0015 & 0.0011 \\
$B(0.3,N)$  & 0.1357 & 0.0928 & 0.0549 & 0.0339 & 0.0188 & 0.0129 & 0.0101 \\
$B(0.5,N)$  & 0.3085 & 0.2364 & 0.1630 & 0.1157 & 0.0760 & 0.0581 & 0.0488 \\
$B(0.7,N)$  & 0.5519 & 0.4717 & 0.3783 & 0.3083 & 0.2396 & 0.2040 & 0.1838 \\
$B(0.9,N)$  & 0.8470 & 0.8046 & 0.7481 & 0.6990 & 0.6428 & 0.6092 & 0.5884 \\
$B(0.99,N)$ & 0.9849 & 0.9799 & 0.9728 & 0.9662 & 0.9582 & 0.9531 & 0.9498 \\
\hline
\end{tabular}
\label{tab}
\end{table}

\begin{remark}
Similar computations give rise to an expansion formula for $\RD$,
the right Riemann--Liouville fractional derivative:
\begin{equation*}
\RD x(t)\approx A(b-t)^{-\a}x(t)-B(b-t)^{1-\a}\dot{x}(t)
-\sum_{p=2}^NC(\a,p)(b-t)^{1-p-\a}W_p(t),
\end{equation*}
where
$$
W_p(t)=(1-p)\int_t^b (b-\tau)^{p-2}x(\tau)d\tau.
$$
The coefficients $A=A(\a,N)$ and $B=B(\a,N)$ are the same as \eqref{A}
and \eqref{B} respectively, and $C(\a,p)$ is as before.
\end{remark}

\begin{remark}
As stated before, Caputo derivatives are closely related to those of Riemann--Liouville.
For any function, $x(\cdot)$, and for $\a\in(0,1)$ for which these two kind
of fractional derivatives, left and right, exist, we have
$$
\LDC x(t)=\LD x(t)-\frac{x(a)}{(t-a)^\a},
$$
and
$$
\RDC x(t)=\RD x(t)-\frac{x(b)}{(b-t)^\a}.
$$
Using these relations, we can easily construct approximation formulas
for the left and right Caputo fractional derivatives, \textrm{e.g.},
\begin{eqnarray*}
\LDC x(t)&\approx & A(\a,N)(t-a)^{-\a}x(t)+B(\a,N)(t-a)^{1-\a}\dot{x}(t)\\
&&-\sum_{p=2}^N C(\a,p)(t-a)^{1-p-\a}V_p(t)-\frac{x(a)}{(t-a)^\a}.
\end{eqnarray*}
\end{remark}


\subsubsection{ Examples}

To examine the approximations provided so far, we take some test functions,
and apply \eqref{expanInt} and \eqref{expanMom} to evaluate their fractional
derivatives. We compute $\LD x(t)$, with $\a=\frac{1}{2}$, for $x(t)=t^4$ and $x(t)=e^{2t}$.
The exact formulas for the fractional derivatives of polynomials are derived from
\begin{equation*}
\LDz(t^n)=\frac{\Gamma(n+1)}{\Gamma(n+1-0.5)}t^{n-0.5},
\end{equation*}
and for the exponential function one has
$$
\LDz(e^{\lambda t})=t^{-0.5}E_{1,1-0.5}(\lambda t),
$$
where $E_{\a,\beta}$ is the two parameter Mittag--Leffler function \eqref{Mittag}.

Figure~\ref{EvalInt} shows the results using approximation \eqref{expanInt}.
As we can see, the third approximations are reasonably accurate for both cases.
Indeed, for $x(t)=t^4$, the approximation with $N=4$ coincides with the
exact solution because the derivatives of order five and more vanish.
\begin{figure}[!ht]
  \begin{center}
    \subfigure[$\LDz(t^4)$]{\includegraphics[scale=0.46]{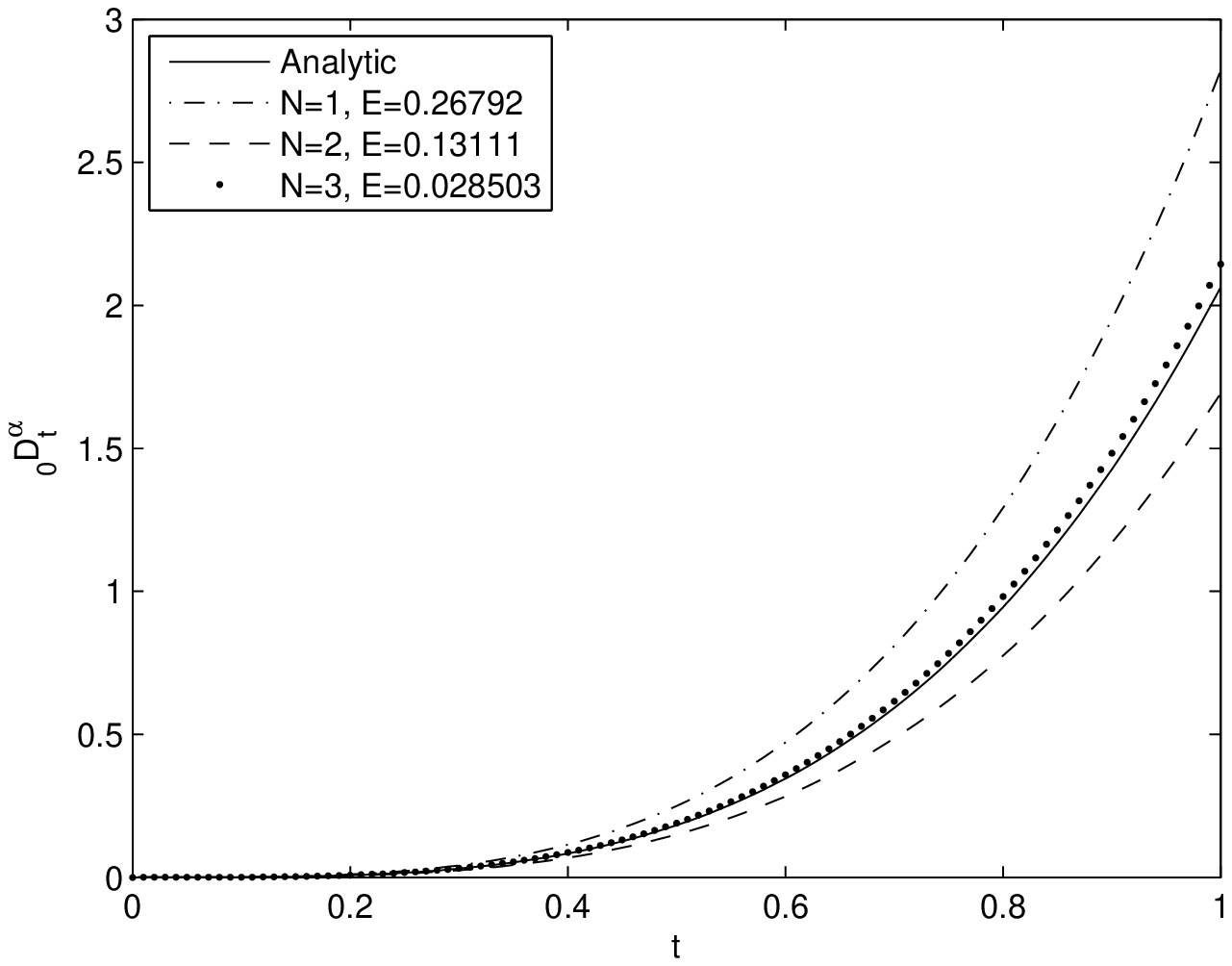}}
    \subfigure[$\LDz(e^{2t})$]{\includegraphics[scale=0.46]{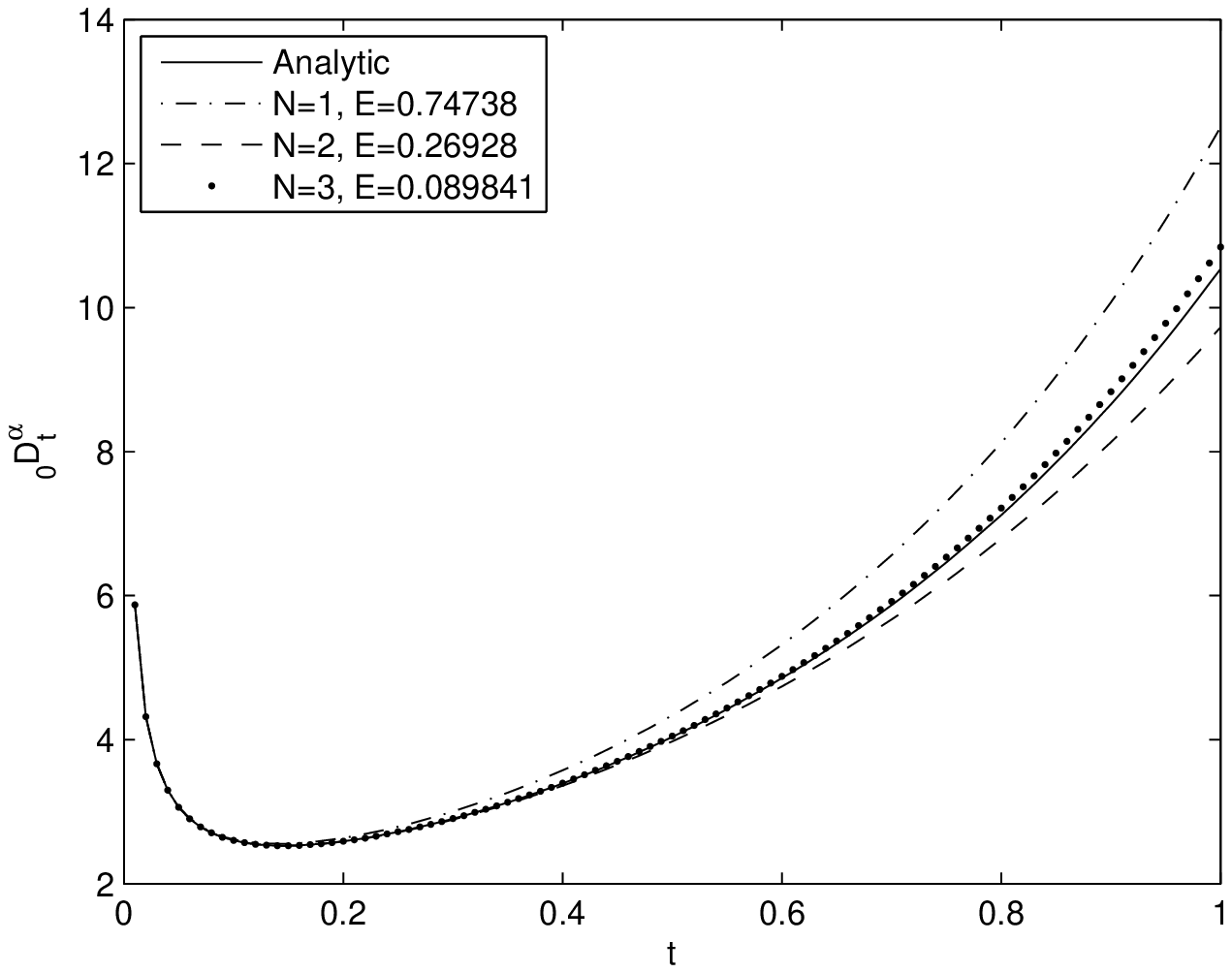}}
  \end{center}
  \caption{Analytic (solid line) versus numerical approximation \eqref{expanInt}.}
  \label{EvalInt}
\end{figure}

Now we use approximation \eqref{expanMom} to evaluate fractional derivatives
of the same test functions. In this case, for a given function $x(\cdot)$,
we can compute $V_p$ by definition, equation \eqref{defVp}. One can also integrate
the system \eqref{sysVp} analytically, if possible, or use any numerical integrator.
It is clearly seen in Figure~\ref{EvalMom} that one can get better results
by using larger values of $N$.
\begin{figure}[!ht]
  \begin{center}
    \subfigure[$\LDz(t^4)$]{\includegraphics[scale=0.46]{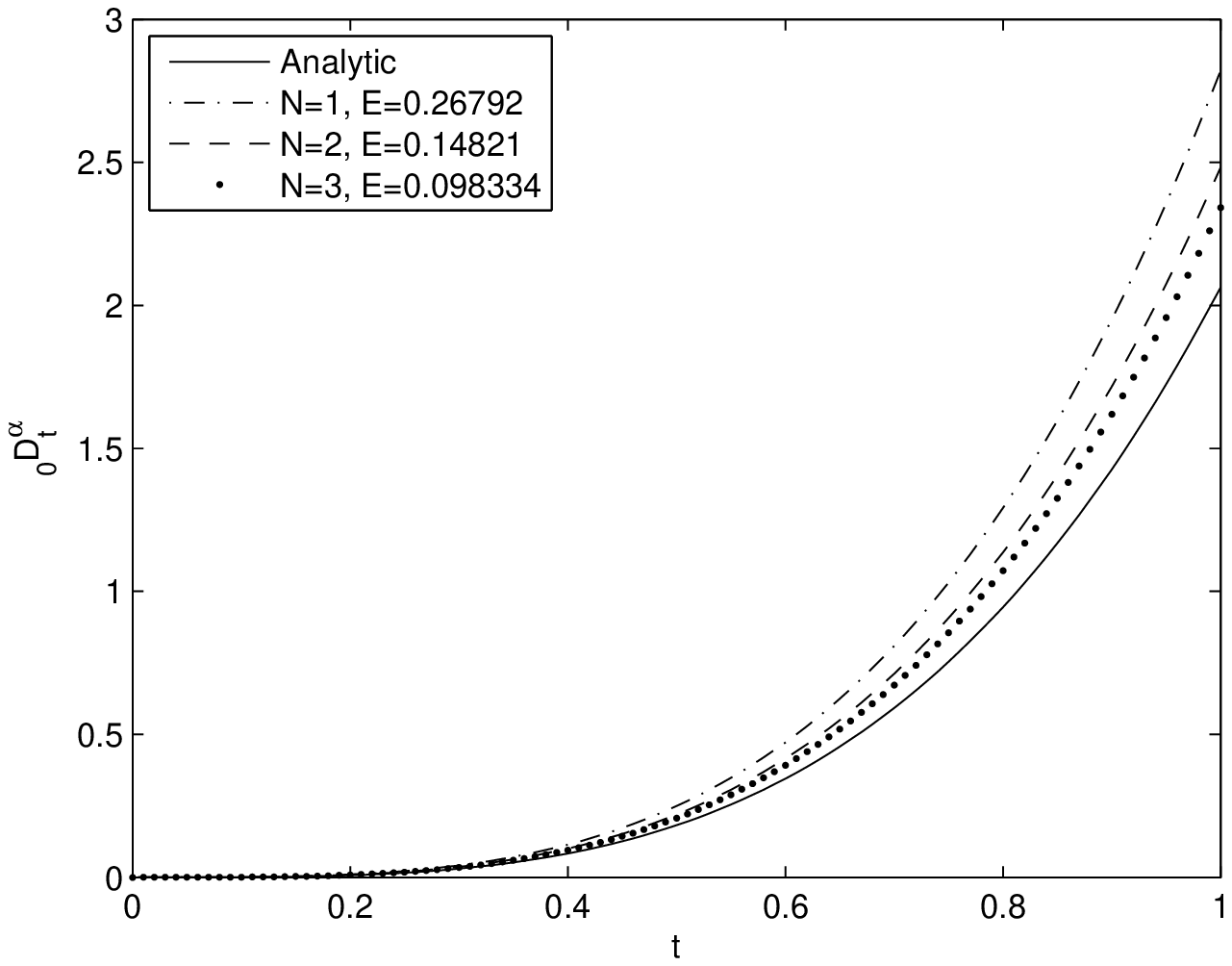}}
    \subfigure[$\LDz(e^{2t})$]{\includegraphics[scale=0.46]{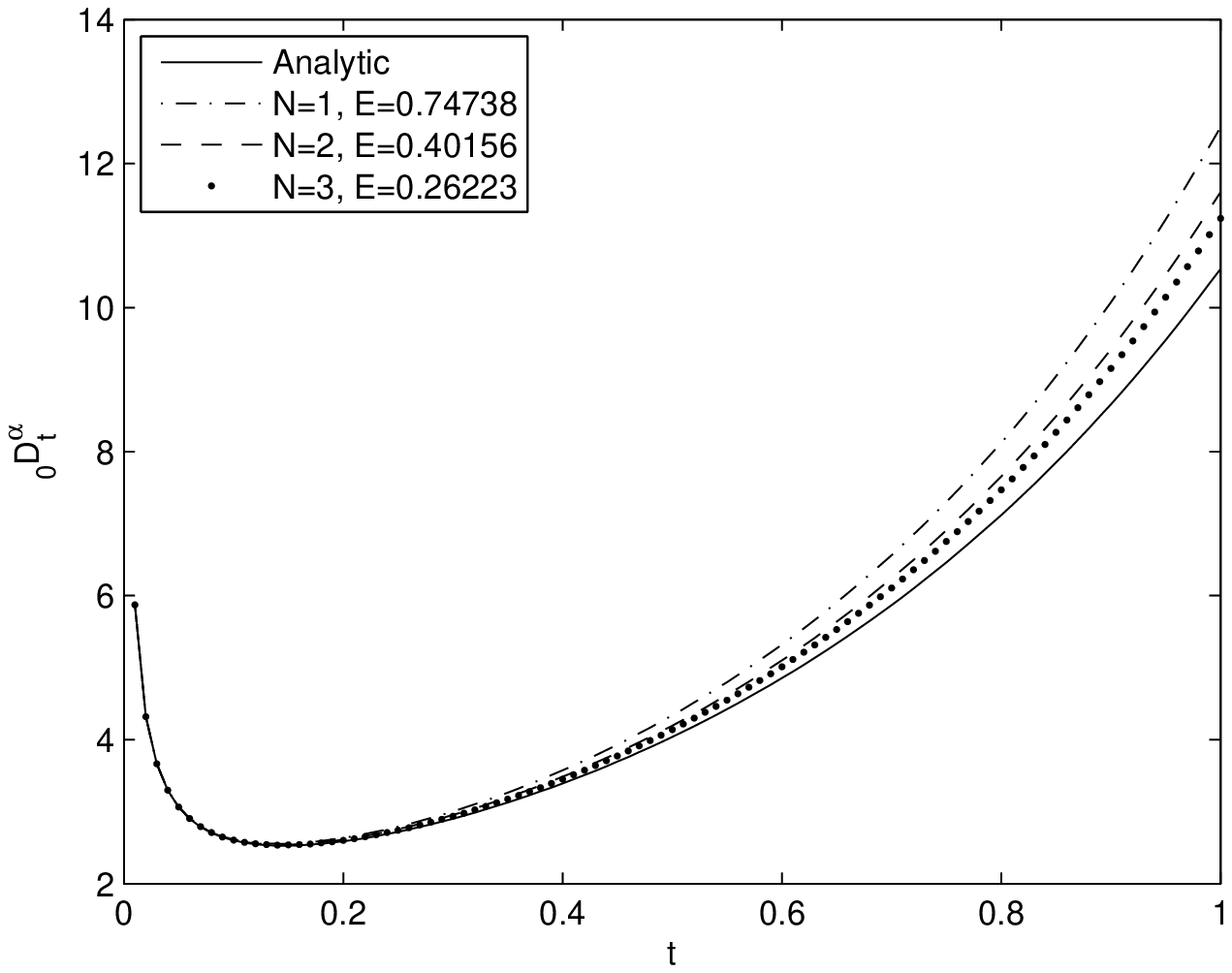}}
  \end{center}
  \caption{Analytic  (solid line) versus numerical approximation \eqref{expanMom}.}
  \label{EvalMom}
\end{figure}
Comparing Figures~\ref{EvalInt} and \ref{EvalMom}, we find out that the approximation
\eqref{expanInt} shows a faster convergence. Observe that both functions are analytic
and it is easy to compute higher-order derivatives.

\begin{remark}
A closer look to \eqref{expanInt} and \eqref{expanMom} reveals that in both cases
the approximations are not computable at $a$ and $b$ for the left and right fractional
derivatives, respectively. At these points we assume that it is possible
to extend them continuously to the closed interval $[a,b]$.
\end{remark}

Following Remark~\ref{BnoB}, we show here that neglecting the first derivative
in the expansion \eqref{expanMom} can cause a considerable loss of accuracy
in computation. Once again, we compute the fractional derivatives of $x(t)=t^4$
and $x(t)=e^{2t}$, but this time we use the approximation given by \eqref{expanAtan}.
Figure~\ref{compareEval} summarizes the results. Approximation \eqref{expanMom}
gives a more realistic approximation using quite small $N$, $3$ in this case.
\begin{figure}[!ht]
\begin{center}
\subfigure[$\LDz(t^4)$]{\includegraphics[scale=0.46]{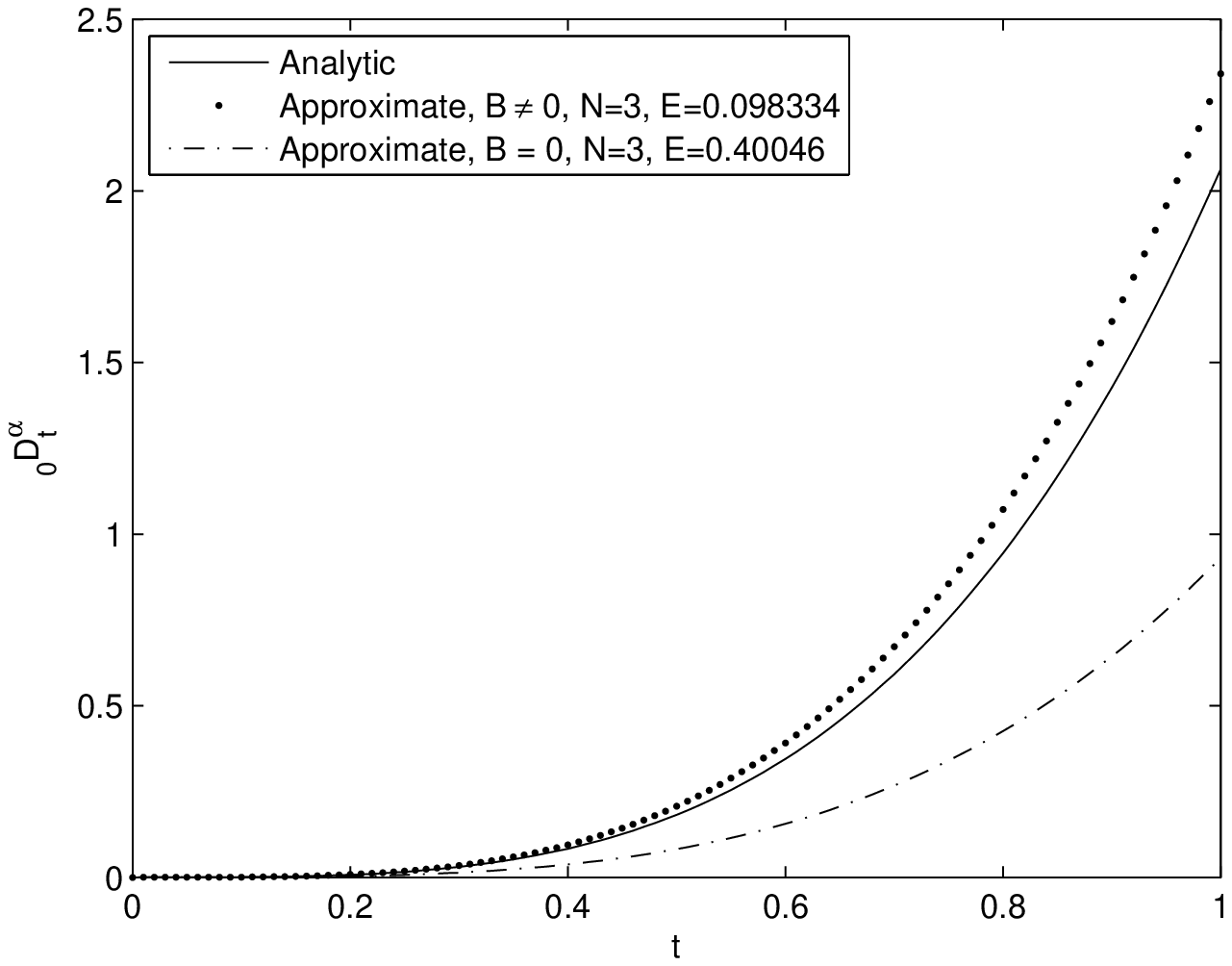}}
\subfigure[$\LDz(e^{2t})$]{\includegraphics[scale=0.46]{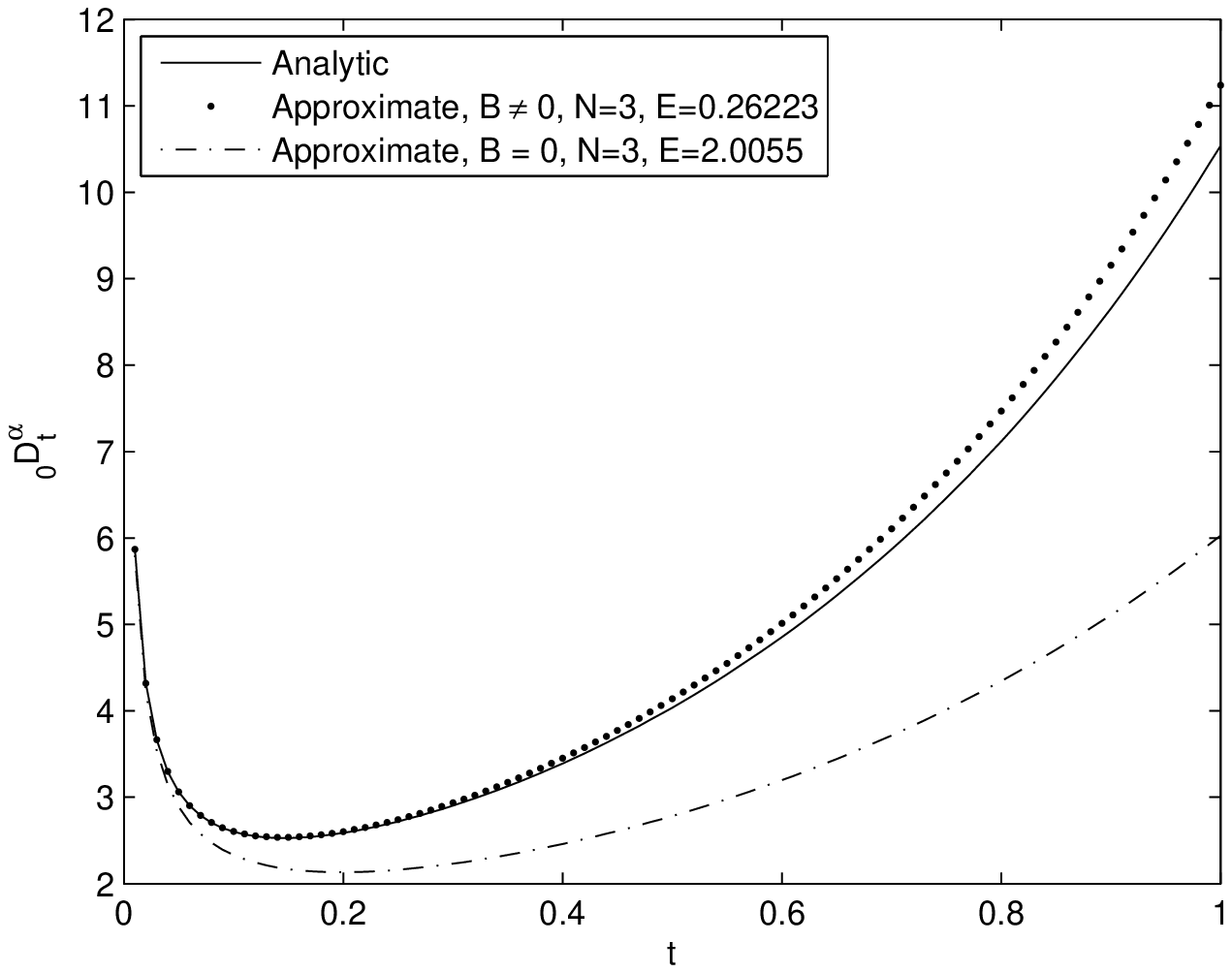}}
\end{center}
\caption{Comparison of approximation \eqref{expanMom}
and approximation \eqref{expanAtan} of \cite{Atan2}.}
\label{compareEval}
\end{figure}


\subsection{Hadamard Derivatives}

For Hadamard derivatives, the expansions can be obtained in a quiet similar way \cite{PATHad}.


\subsubsection{Approximation by a Sum of Integer Order Derivatives}

Assume that a function $x(\cdot)$ admits derivatives of any order,
then expansion formulas for the Hadamard fractional integrals
and derivatives of $x$, in terms of its integer-order derivatives,
are given in \cite[Theorem~17]{Butzer2}:
$$
{_0\mathcal{I}_t^\a}x(t)=\sum_{k=0}^\infty S(-\a,k)t^k x^{(k)}(t)
$$
and
$$
{_0\mathcal{D}_t^\a}x(t)=\sum_{k=0}^\infty S(\a,k)t^k x^{(k)}(t),
$$
where
$$
S(\a,k)=\frac{1}{k!}\sum_{j=1}^k(-1)^{k-j} {k \choose j} j^{\a}
$$
is the Stirling function.

As approximations, we truncate infinite sums at an appropriate
order $N$ and get the following formulas:
$$
{_0\mathcal{I}_t^\a}x(t)\approx\sum_{k=0}^N S(-\a,k)t^k x^{(k)}(t),
$$
and
$$
{_0\mathcal{D}_t^\a}x(t)\approx\sum_{k=0}^N S(\a,k)t^k x^{(k)}(t).
$$


\subsubsection{Approximation Using Moments of a Function}

The same idea of expanding Riemann--Liouville derivatives,
with slightly different techniques, is used to derive expansion
formulas for left and right Hadamard derivatives. The following
lemma is a basis for these new relations.

\begin{lemma}
Let $\a\in(0,1)$ and $x(\cdot)$ be an absolutely continuous function on $[a,b]$.
Then the Hadamard fractional derivatives may be expressed by
\begin{equation}
\label{LHDformula}
\LHD x(t)=\frac{x(a)}{\Gamma(1-\a)}\left(\ln\frac{t}{a}\right)^{-\a}
+\frac{1}{\Gamma(1-\a)}\int_a^t \left(\ln\frac{t}{\t}\right)^{-\a}\dot{x}(\t)d\t
\end{equation}
and
\begin{equation*}
\RHD x(t)=\frac{x(b)}{\Gamma(1-\a)}\left(\ln\frac{b}{t}\right)^{-\a}
-\frac{1}{\Gamma(1-\a)}\int_t^b \left(\ln\frac{\t}{t}\right)^{-\a}\dot{x}(\t)d\t.
\end{equation*}
\end{lemma}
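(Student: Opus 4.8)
The plan is to follow exactly the pattern that yields the Riemann--Liouville decomposition \eqref{DecThm} in Lemma~\ref{LemRL}: first strip the outer differential operator off, integrate by parts inside the resulting Hadamard integral, and only then reapply the operator. Since $\a\in(0,1)$ forces $n=[\a]+1=1$, the definition reads
\begin{equation*}
\LHD x(t)=\left(t\frac{d}{dt}\right)\frac{1}{\Gamma(1-\a)}\int_a^t\left(\ln\frac{t}{\t}\right)^{-\a}\frac{x(\t)}{\t}\,d\t .
\end{equation*}

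First I would integrate by parts in the inner integral. The key observation is that the logarithmic kernel has a clean antiderivative against the Hadamard measure $d\t/\t$: since $\frac{d}{d\t}\left(\ln\frac{t}{\t}\right)^{1-\a}=-\frac{1-\a}{\t}\left(\ln\frac{t}{\t}\right)^{-\a}$, the function $-\frac{1}{1-\a}\left(\ln\frac{t}{\t}\right)^{1-\a}$ serves as $v$ with $dv=\left(\ln\frac{t}{\t}\right)^{-\a}\frac{d\t}{\t}$ and $u=x(\t)$. Because $1-\a>0$, the boundary term at $\t=t$ vanishes, while the one at $\t=a$ leaves $x(a)\left(\ln\frac{t}{a}\right)^{1-\a}/(1-\a)$; absolute continuity of $x$ guarantees that $\dot x$ exists a.e. and is integrable, so the remaining integral $\int_a^t\left(\ln\frac{t}{\t}\right)^{1-\a}\dot x(\t)\,d\t$ is well defined. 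Using $\Gamma(2-\a)=(1-\a)\Gamma(1-\a)$, this rewrites $\frac{1}{\Gamma(1-\a)}$ times the inner integral as
\begin{equation*}
\frac{x(a)}{\Gamma(2-\a)}\left(\ln\frac{t}{a}\right)^{1-\a}
+\frac{1}{\Gamma(2-\a)}\int_a^t\left(\ln\frac{t}{\t}\right)^{1-\a}\dot x(\t)\,d\t .
\end{equation*}

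Next I would apply $t\frac{d}{dt}$ to this expression term by term. For the boundary term, $t\frac{d}{dt}\left(\ln\frac{t}{a}\right)^{1-\a}=(1-\a)\left(\ln\frac{t}{a}\right)^{-\a}$, and after cancelling $(1-\a)/\Gamma(2-\a)=1/\Gamma(1-\a)$ this is precisely the first summand of \eqref{LHDformula}. For the integral term I would differentiate under the integral sign via the Leibniz rule: the contribution from the moving endpoint $\t=t$ carries the factor $\left(\ln\frac{t}{t}\right)^{1-\a}=0$ and so drops out, while differentiating the kernel gives $\frac{\partial}{\partial t}\left(\ln\frac{t}{\t}\right)^{1-\a}=\frac{1-\a}{t}\left(\ln\frac{t}{\t}\right)^{-\a}$; multiplying by $t$ and again absorbing $(1-\a)/\Gamma(2-\a)=1/\Gamma(1-\a)$ recovers the second summand of \eqref{LHDformula}. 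The right-hand formula for $\RHD$ follows verbatim after replacing $t\frac{d}{dt}$ by $-t\frac{d}{dt}$, integrating from $t$ to $b$, and tracking the resulting sign on the $\dot x$ integral.

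The step that needs the most care is the differentiation under the integral sign, because after differentiating the kernel one passes from the bounded factor $\left(\ln\frac{t}{\t}\right)^{1-\a}$ to the singular factor $\left(\ln\frac{t}{\t}\right)^{-\a}$, which blows up as $\t\to t^-$. The singularity is integrable since $\a<1$, so the Leibniz rule remains valid, but I would justify it carefully---for instance by a dominated-convergence argument on difference quotients, or by first splitting off a neighbourhood of $\t=t$---rather than differentiating purely formally. This is the same delicate point underlying the Riemann--Liouville identity \eqref{DecThm}, and it is precisely why absolute continuity of $x$ (hence integrability of $\dot x$ against the weakly singular kernel) is assumed throughout.
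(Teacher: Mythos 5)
Your proposal is correct, but it takes a genuinely different route from the paper, for the simple reason that the paper gives no proof of this lemma at all: it merely cites \cite[Theorem~3.2]{Kilbas2}, where the result is established for arbitrary $\a>0$. Your argument---specialize the definition to $n=1$, integrate by parts against the Hadamard measure $d\t/\t$ using the antiderivative $-\frac{1}{1-\a}\left(\ln\frac{t}{\t}\right)^{1-\a}$, then apply $t\frac{d}{dt}$---is a clean, self-contained derivation for $\a\in(0,1)$, and all the bookkeeping checks out: the boundary term at $\t=t$ vanishes, the constants recombine via $\Gamma(2-\a)=(1-\a)\Gamma(1-\a)$, and the signs for $\RHD$ come out right. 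What your route buys is exactly what the paper leaves implicit: a proof in the same spirit as the Riemann--Liouville decomposition of Lemma~\ref{LemRL} (itself quoted from the literature without proof), making the Hadamard case equally transparent. The one step to tighten is the one you flag yourself: with $x(\cdot)$ only absolutely continuous, $\dot{x}$ is merely integrable, so justifying the Leibniz rule by difference quotients is awkward. The cleanest rigorous fix is Fubini rather than dominated convergence: write $\left(\ln\frac{t}{\t}\right)^{1-\a}=(1-\a)\int_\t^t\left(\ln\frac{u}{\t}\right)^{-\a}\frac{du}{u}$, interchange the order of integration to see that $\int_a^t\left(\ln\frac{t}{\t}\right)^{1-\a}\dot{x}(\t)d\t$ is the indefinite integral in $t$ of $u\mapsto\frac{1-\a}{u}\int_a^u\left(\ln\frac{u}{\t}\right)^{-\a}\dot{x}(\t)d\t$, hence absolutely continuous with precisely the desired derivative almost everywhere. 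With that substitution your identity holds for a.e.\ $t\in(a,b)$, which is the natural form of the statement under the absolute-continuity hypothesis, exactly as in the Riemann--Liouville formula \eqref{DecThm}.
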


A proof of this lemma, for an arbitrary $\a>0$,
can be found in \cite[Theorem~3.2]{Kilbas2}.

\begin{thm}
Let $0<a<b$ and $x:[a,b]\to\mathbb R$ be
an absolutely continuous function. Then
\begin{multline*}
\LHD x(t)=\frac{1}{\Gamma(1-\a)}\left(\ln\frac{t}{a}\right)^{-\a}x(t)
+B(\a)\left(\ln\frac{t}{a}\right)^{1-\a}t\dot{x}(t)\\
-\sum_{p=2}^\infty \left[C(\a,p)\left(\ln\frac{t}{a}\right)^{1-\a-p}V_p(t)
- \frac{\Gamma(p+\a - 1)}{\Gamma(\a)\Gamma(1-\a)(p-1)!}\left(
\ln\frac{t}{a}\right)^{-\a}x(t)\right]
\end{multline*}
with
\begin{eqnarray*}
B(\a)&=&\frac{1}{\Gamma(2-\a)}\left(1
+\sum_{p=1}^\infty\frac{\Gamma(p+\a-1)}{\Gamma(\a-1)p!}\right),\\
C(\a,p)&=&\frac{\Gamma(p+\a-1)}{\Gamma(-\a)\Gamma(1+\a)(p-1)!},\\
V_p(t)&=&(1-p)\int_a^t \left(\ln\frac{\t}{a}\right)^{p-2}\frac{x(\t)}{\t}d\t.
\end{eqnarray*}
\end{thm}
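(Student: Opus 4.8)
The plan is to mirror, line by line, the proof of the Riemann--Liouville expansion \eqref{expanMomInf}, under the dictionary $(t-\t)\leftrightarrow\ln(t/\t)$, $(\t-a)\leftrightarrow\ln(\t/a)$ and $d\t\leftrightarrow d\t/\t$. The starting point is the integral representation \eqref{LHDformula} of the preceding lemma, which plays here the role that the decomposition \eqref{DecThm} played in the Riemann--Liouville case. Thus I would begin from
$$\LHD x(t)=\frac{x(a)}{\Gamma(1-\a)}\left(\ln\frac{t}{a}\right)^{-\a}+\frac{1}{\Gamma(1-\a)}\int_a^t\left(\ln\frac{t}{\t}\right)^{-\a}\dot x(\t)\,d\t$$
and integrate the remaining integral by parts, aiming for the logarithmic analogue of \eqref{exp2}.

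This first integration by parts is the step I expect to demand the most care, since the logarithmic kernel does not antidifferentiate as cleanly as a power of $(t-\t)$. The device I would use is the identity $\frac{d}{d\t}\left(\ln\frac{t}{\t}\right)^{1-\a}=-\frac{1-\a}{\t}\left(\ln\frac{t}{\t}\right)^{-\a}$, which lets me write $\left(\ln\frac{t}{\t}\right)^{-\a}=-\frac{\t}{1-\a}\frac{d}{d\t}\left(\ln\frac{t}{\t}\right)^{1-\a}$ and so trade the singular kernel for $\left(\ln\frac{t}{\t}\right)^{1-\a}$ at the price of a factor $\t$. Integrating by parts with $u=\t\dot x(\t)$ then yields a boundary term $\tfrac{a\dot x(a)}{\Gamma(2-\a)}\left(\ln\frac{t}{a}\right)^{1-\a}$ (the contribution at $\t=t$ vanishes because $1-\a>0$) plus the integral $\tfrac{1}{\Gamma(2-\a)}\int_a^t\left(\ln\frac{t}{\t}\right)^{1-\a}\frac{d}{d\t}[\t\dot x(\t)]\,d\t$. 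This is exactly the analogue of \eqref{exp2}, and it makes transparent why the operator $\t\frac{d}{d\t}$ --- equivalently, the factor $t\dot x(t)$ in the statement --- is forced upon us in place of the plain second derivative of the Riemann--Liouville proof.

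From here the argument is formally identical. Writing $\ln\frac{t}{\t}=\ln\frac{t}{a}\bigl(1-\frac{\ln(\t/a)}{\ln(t/a)}\bigr)$ and using $0\le\frac{\ln(\t/a)}{\ln(t/a)}\le1$ on $[a,t]$, I would expand $\left(\ln\frac{t}{\t}\right)^{1-\a}=\left(\ln\frac{t}{a}\right)^{1-\a}\sum_{p=0}^\infty\frac{\Gamma(p-1+\a)}{\Gamma(\a-1)p!}\bigl(\frac{\ln(\t/a)}{\ln(t/a)}\bigr)^p$, justify interchanging summation and integration by the uniform convergence of this binomial series on the interval, and integrate term by term. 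Two further integrations by parts, of $\int_a^t\left(\ln\frac{\t}{a}\right)^p\frac{d}{d\t}[\t\dot x]\,d\t$ and then of $\int_a^t\left(\ln\frac{\t}{a}\right)^{p-1}\dot x\,d\t$ (using $\frac{d}{d\t}\left(\ln\frac{\t}{a}\right)^p=\frac{p}{\t}\left(\ln\frac{\t}{a}\right)^{p-1}$, where the stray $\t$ conveniently cancels the $\t$ in $\t\dot x$), produce respectively the first-derivative coefficient and the moments $V_p(t)=(1-p)\int_a^t\left(\ln\frac{\t}{a}\right)^{p-2}\frac{x(\t)}{\t}\,d\t$. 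As before, the $p=1$ term must be separated: its boundary part $-\frac{x(a)}{\Gamma(1-\a)}\left(\ln\frac{t}{a}\right)^{-\a}$ cancels the leading $x(a)$-term inherited from \eqref{LHDformula}, while its $x(t)$-part becomes the leading coefficient $\frac{1}{\Gamma(1-\a)}\left(\ln\frac{t}{a}\right)^{-\a}x(t)$ of the statement.

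It then remains to collect the three families of terms. The $t\dot x(t)$ contributions (the $p=0$ boundary together with the $t\dot x(t)$-parts of the $p\ge1$ terms) sum to $B(\a)=\frac{1}{\Gamma(2-\a)}\bigl(1+\sum_{p=1}^\infty\frac{\Gamma(p-1+\a)}{\Gamma(\a-1)p!}\bigr)$; the $V_p$ contributions carry the coefficient $\frac{\Gamma(p-1+\a)}{\Gamma(2-\a)\Gamma(\a-1)(p-1)!}$, which equals the stated $C(\a,p)=\frac{\Gamma(p+\a-1)}{\Gamma(-\a)\Gamma(1+\a)(p-1)!}$ via $\Gamma(2-\a)\Gamma(\a-1)=\Gamma(-\a)\Gamma(1+\a)$; and the residual $\left(\ln\frac{t}{a}\right)^{-\a}x(t)$ terms assemble into the compensating sum with coefficient $\frac{\Gamma(p+\a-1)}{\Gamma(\a)\Gamma(1-\a)(p-1)!}$, using $\Gamma(2-\a)\Gamma(\a-1)=-\Gamma(\a)\Gamma(1-\a)$. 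Both gamma identities are immediate from $\Gamma(z+1)=z\Gamma(z)$. Assembling these pieces gives the claimed formula; the only genuine obstacle is the first integration by parts, after which everything is bookkeeping dictated by the kernel substitution.
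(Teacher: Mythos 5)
Your proposal is correct and follows essentially the same route as the paper's own proof: starting from the integral representation \eqref{LHDformula}, performing the first integration by parts with $u=\t\dot{x}(\t)$ against the logarithmic kernel, expanding $\left(\ln\frac{t}{\t}\right)^{1-\a}$ by the binomial series, interchanging sum and integral, and then carrying out the two further integrations by parts that produce the $t\dot{x}(t)$ coefficient and the moments $V_p$. Your explicit treatment of the $p=1$ cancellation and the gamma-function identities $\Gamma(2-\a)\Gamma(\a-1)=\Gamma(-\a)\Gamma(1+\a)=-\Gamma(\a)\Gamma(1-\a)$ merely spells out what the paper compresses into ``a final step of extracting the first term in the sum and integration by parts finishes the proof.''
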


\begin{proof}
We rewrite \eqref{LHDformula} as
\begin{equation*}
\LHD x(t)=\frac{x(a)}{\Gamma(1-\a)}\left(\ln\frac{t}{a}\right)^{-\a}
+\frac{1}{\Gamma(1-\a)}\int_a^t \frac{1}{\t}\left(
\ln\frac{t}{\t}\right)^{-\a}\t\dot{x}(\t)d\t
\end{equation*}
and then integrating by parts gives
\begin{eqnarray*}
\LHD x(t)&=&\frac{x(a)}{\Gamma(1-\a)}\left(\ln\frac{t}{a}\right)^{-\a}
+\frac{a\dot{x}(a)}{\Gamma(2-\a)}\left(\ln\frac{t}{a}\right)^{1-\a}\\
&&+\frac{1}{\Gamma(2-\a)}\int_a^t \left(\ln\frac{t}{\t}\right)^{1-\a}[
\dot{x}(\t)+\t\ddot{x}(\t)]d\t.
\end{eqnarray*}
Now we use the following expansion for
$\left(\ln\frac{t}{\t}\right)^{1-\a}$, using the binomial theorem,
\begin{eqnarray*}
\left(\ln\frac{t}{\t}\right)^{1-\a}&=&\left(
\ln\frac{t}{a}\right)^{1-\a}\left(1-\frac{\ln\frac{\t}{a}}{\ln\frac{t}{a}}\right)^{1-\a}\\
&=&\left(\ln\frac{t}{a}\right)^{1-\a}\sum_{p=0}^\infty\frac{\Gamma(p-1+\a)}{\Gamma(\a-1)p!}
\cdot \frac{\left(\ln\frac{\t}{a}\right)^p}{\left(\ln\frac{t}{a}\right)^p}.
\end{eqnarray*}
This implies that
\begin{eqnarray*}
\LHD x(t)&=&\frac{x(a)}{\Gamma(1-\a)}\left(\ln\frac{t}{a}\right)^{-\a}
+\frac{a\dot{x}(a)}{\Gamma(2-\a)}\left(\ln\frac{t}{a}\right)^{1-\a}
+\frac{1}{\Gamma(2-\a)}\left(\ln\frac{t}{a}\right)^{1-\a}\\
&&\quad\times \sum_{p=0}^\infty\frac{\Gamma(p-1+\a)}{\Gamma(\a-1)p!}\left(
\ln\frac{t}{a}\right)^{-p}\int_a^t
\left(\ln\frac{\t}{a}\right)^{p}[\dot{x}(\t)+\t\ddot{x}(\t)]d\t.
\end{eqnarray*}
Extracting the first term of the infinite sum, simplifications and another integration
by parts using $u=\left(\ln\frac{\t}{a}\right)^{p}$,
$du=(p)\frac{1}{\t}\left(\ln\frac{\t}{a}\right)^{p-1}$
and $dv=[\dot{x}(\t)+\t\ddot{x}(\t)]d\t$, $v=\t\dot{x}(\t)$ yields
\begin{eqnarray*}
\LHD x(t)&=&\frac{x(a)}{\Gamma(1-\a)}\left(\ln\frac{t}{a}\right)^{-\a}
+B(\a)\left(\ln\frac{t}{a}\right)^{1-\a}t\dot{x}(t)
-\frac{1}{\Gamma(2-\a)}\left(\ln\frac{t}{a}\right)^{1-\a}\\
&&\quad \times \sum_{p=1}^\infty\frac{\Gamma(p-1+\a)}{\Gamma(\a-1)(p-1)!}\left(
\ln\frac{t}{a}\right)^{-p}\int_a^t \left(\ln\frac{\t}{a}\right)^{p-1}\dot{x}(\t)d\t.
\end{eqnarray*}
A final step of extracting the first term in the sum and integration by parts finishes the proof.
\end{proof}

For practical purposes, finite sums up to order $N$
are considered and the approximation becomes
\begin{eqnarray}
\label{HadAprx}
\LHD x(t)&\approx&A(\a,N)\left(\ln\frac{t}{a}\right)^{-\a}x(t)
+B(\a,N)\left(\ln\frac{t}{a}\right)^{1-\a}t\dot{x}(t)\nonumber\\
&&\quad+\sum_{p=2}^N C(\a,p)\left(\ln\frac{t}{a}\right)^{1-\a-p}V_p(t)
\end{eqnarray}
with
\begin{eqnarray*}
A(\a,N)&=&\frac{1}{\Gamma(1-\a)}\left(1
+\sum_{p=2}^N\frac{\Gamma(p+\a-1)}{\Gamma(\a)(p-1)!}\right),\\
B(\a,N)&=&\frac{1}{\Gamma(2-\a)}\left(1
+\sum_{p=1}^N\frac{\Gamma(p+\a-1)}{\Gamma(\a-1)p!}\right).
\end{eqnarray*}

\begin{remark}
The right Hadamard fractional derivative can be expanded in the same way.
This gives the following approximation:
\begin{eqnarray*}
\RHD x(t)&\approx& A(\a,N)\left(\ln\frac{b}{t}\right)^{-\a}x(t)
-B(\a,N)\left(\ln\frac{b}{t}\right)^{1-\a}t\dot{x}(t)\\
&&\quad-\sum_{p=2}^N C(\a,p)\left(\ln\frac{b}{t}\right)^{1-\a-p}W_p(t)
\end{eqnarray*}
with
\begin{equation*}
W_p(t)=(1-p)\int_t^b
\left(\ln\frac{b}{\tau}\right)^{p-2}\frac{x(\tau)}{\tau}d\tau.
\end{equation*}
\end{remark}


\subsubsection{Examples}

In this section we apply \eqref{HadAprx} to compute fractional derivatives,
of order $\a=\frac{1}{2}$, for $x(t)=t^4$ and $x(t)=\ln(t)$. The exact
Hadamard fractional derivative is available for $x(t)=t^4$ and we have
$$
\LHDHz (t^4)=\frac{\sqrt{\ln t}}{\Gamma(1.5)}.
$$
For $x(t)=\ln(t)$, only an approximation of the Hadamard fractional
derivative is found in the literature:
$$
\LHDHz \ln(t)\approx \frac{1}{\Gamma(0.5)\sqrt{\ln t}}
+\frac{0.5908179503}{\Gamma(0.5)}9t^9 \mbox{erf}(3\sqrt{\ln t}).
$$
The results of applying \eqref{HadAprx} to evaluate fractional
derivatives are depicted in Figure~\ref{HadTestFig}.

\begin{figure}[ht!]
\begin{center}
\subfigure[$\LHDHz(\ln t)$]{\includegraphics[scale=0.46]{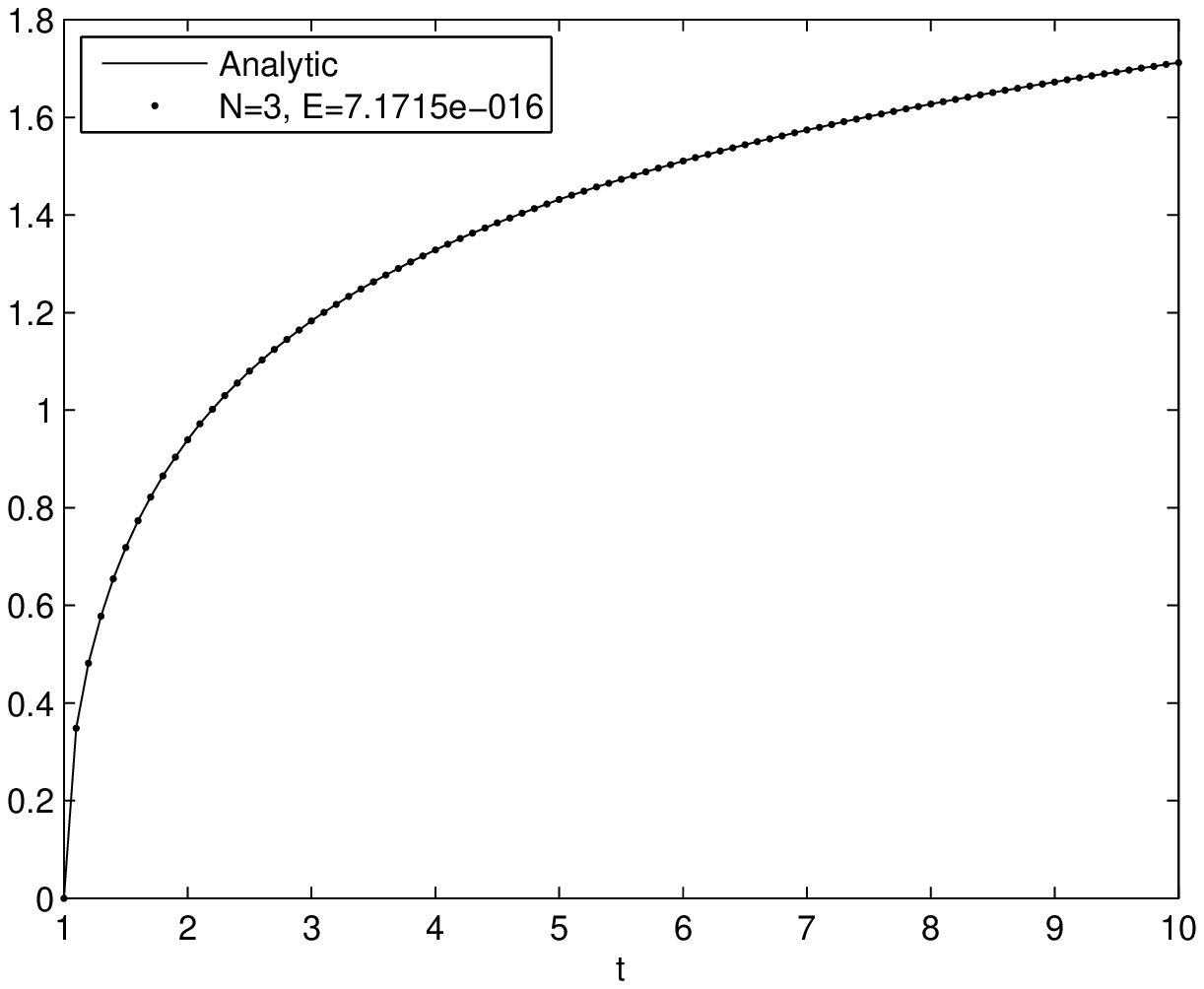}}
\subfigure[$\LHDHz(t^4)$]{\includegraphics[scale=0.46]{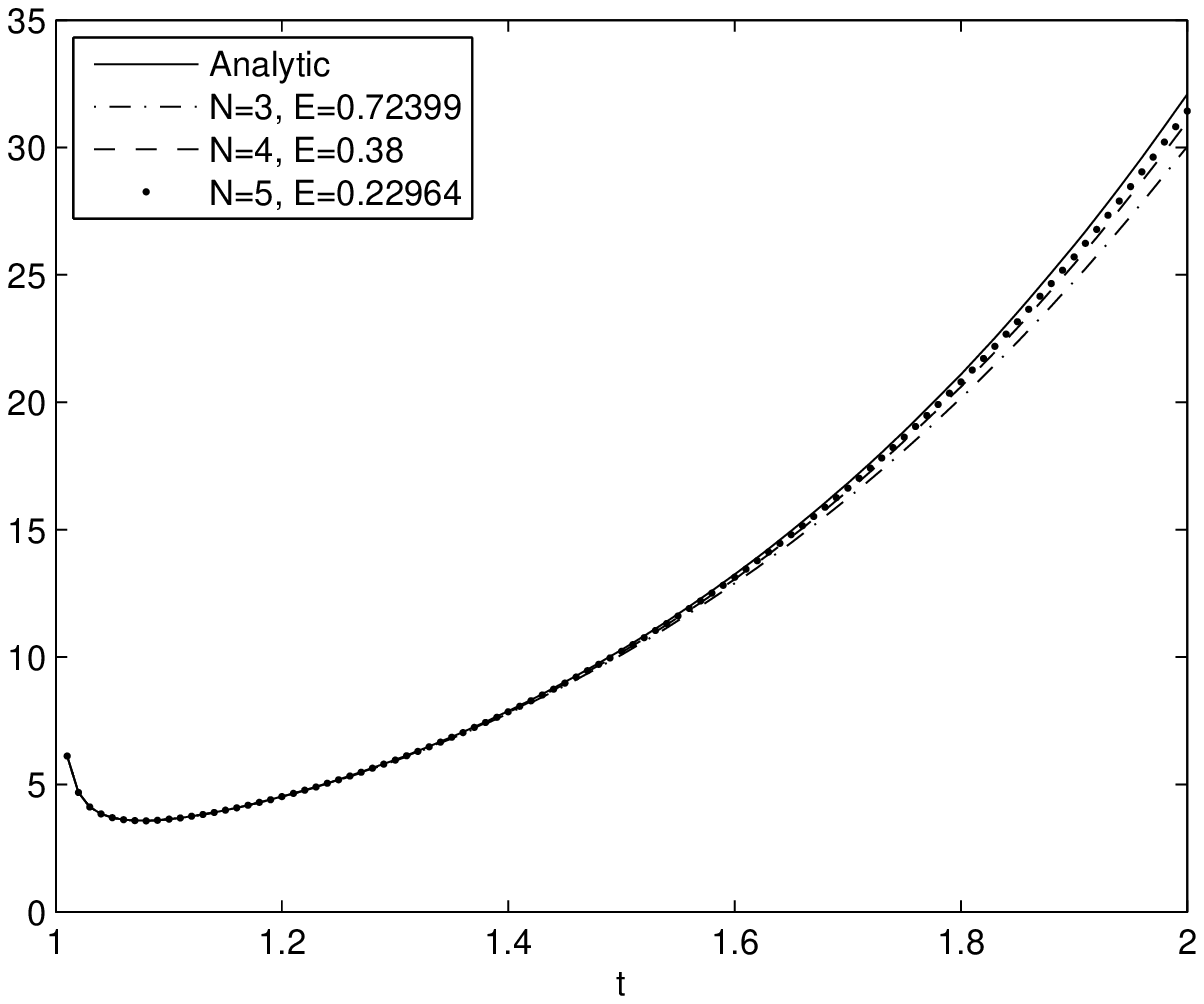}}
\end{center}
\caption{Analytic versus numerical approximation \eqref{HadAprx}.}
\label{HadTestFig}
\end{figure}


\subsubsection{Error Analysis}

When we approximate an infinite series by a finite sum, the choice
of the order of approximation is a key question. Having an estimate
knowledge of truncation errors, one can choose properly up to which
order the approximations should be made to suit the accuracy requirements.
In this section we study the errors of the approximations  presented so far.

Separation of an error term in \eqref{ExpIntErr} concludes in
\begin{multline}
\label{IntErr}
\LDa x(t)=\frac{1}{\Gamma(1-\a)}\frac{d}{dt}
\int_a^t \left((t-\t)^{-\a}\sum_{k=0}^{N}
\frac{(-1)^k x^{(k)}(t)}{k!}(t-\t)^k\right)d\t\\
+\frac{1}{\Gamma(1-\a)}\frac{d}{dt}\int_a^t
\left((t-\t)^{-\a}\sum_{k=N+1}^{\infty}
\frac{(-1)^k x^{(k)}(t)}{k!}(t-\t)^k\right)d\t.
\end{multline}
The first term in \eqref{IntErr} gives \eqref{expanInt} directly
and the second term is the error caused by truncation. The next
step is to give a local upper bound for this error, $E_{tr}(t)$.

The series
$$
\sum_{k=N+1}^{\infty}\frac{(-1)^k x^{(k)}(t)}{k!}(t-\t)^k,
\quad \t\in (a,t), \quad t\in (a,b),
$$
is the remainder of the Taylor expansion of $x(\t)$ and thus
bounded by $\left|\frac{M}{(N+1)!}(t-\t)^{N+1}\right|$ in which
$$
M=\displaystyle\max_{\t \in [a,t]}|x^{(N+1)}(\t)|.
$$
Then,
$$
E_{tr}(t)\leq \left|\frac{M}{\Gamma(1-\alpha)(N+1)!}\frac{d}{dt}
\int_a^t (t-\t)^{N+1-\alpha}d\t\right|
=\frac{M}{\Gamma(1-\alpha)(N+1)!}(t-a)^{N+1-\alpha}.
$$

In order to estimate a truncation error for approximation \eqref{expanMom},
the expansion procedure is carried out with separation
of $N$ terms in binomial expansion as
\begin{eqnarray}
\label{expanError}
\left(1-\frac{\t-a}{t-a}\right)^{1-\a}&=&\sum_{p=0}^{\infty}
\frac{\Gamma(p-1+\a)}{\Gamma(\a-1)p!}\left(\frac{\t-a}{t-a}\right)^p\nonumber\\
&=&\sum_{p=0}^{N}\frac{\Gamma(p-1+\a)}{\Gamma(\a-1)p!}\left(
\frac{\t-a}{t-a}\right)^p+R_N(\t),
\end{eqnarray}
where
$$
R_N(\t)=\sum_{p=N+1}^{\infty}\frac{\Gamma(p
-1+\a)}{\Gamma(\a-1)p!}\left(\frac{\t-a}{t-a}\right)^p.
$$
Substituting \eqref{expanError} into \eqref{exp2}, we get
\begin{eqnarray*}
\LD x(t)&=&\frac{x(a)}{\Gamma(1-\a)}(t-a)^{-\a}
+\frac{\dot{x}(a)}{\Gamma(2-\a)}(t-a)^{1-\a}\\
&& +\frac{(t-a)^{1-\a}}{\Gamma(2-\a)}\int_a^t
\left(\sum_{p=0}^{N}\frac{\Gamma(p-1+\a)}{\Gamma(\a-1)p!}\left(
\frac{\t-a}{t-a}\right)^p+R_N(\t)\right)\ddot{x}(\t)d\t\\
&=&\frac{x(a)}{\Gamma(1-\a)}(t-a)^{-\a}
+\frac{\dot{x}(a)}{\Gamma(2-\a)}(t-a)^{1-\a}\\
&& +\frac{(t-a)^{1-\a}}{\Gamma(2-\a)}\int_a^t
\left(\sum_{p=0}^{N}\frac{\Gamma(p-1+\a)}{\Gamma(\a-1)p!}
\left(\frac{\t-a}{t-a}\right)^p\right)\ddot{x}(\t)d\t\\
&& +\frac{(t-a)^{1-\a}}{\Gamma(2-\a)}\int_a^t R_N(\t)\ddot{x}(\t)d\t.
\end{eqnarray*}
At this point, we apply the techniques of \cite{Atan2} to the first
three terms with finite sums. Then, we receive \eqref{expanMom}
with an extra term of truncation error:
\begin{equation*}
E_{tr}(t)=\frac{(t-a)^{1-\a}}{\Gamma(2-\a)}\int_a^t R_N(\t)\ddot{x}(\t)d\t.
\end{equation*}
Since $0\leq\frac{\t-a}{t-a}\leq 1$ for $\t\in [a,t]$, one has
\begin{eqnarray*}
|R_N(\t)|&\leq & \sum_{p=N+1}^{\infty}\left|
\frac{\Gamma(p-1+\a)}{\Gamma(\a-1)p!}\right|
=\sum_{p=N+1}^{\infty}\left|\binom{1-\a}{p}\right|
\leq\sum_{p=N+1}^{\infty}\frac{\mathrm{e}^{(1-\a)^2+1-\a}}{p^{2-\a}} \\
&\leq&\int_{p=N}^{\infty}\frac{\mathrm{e}^{(1-\a)^2+1-\a}}{p^{2-\a}}dp
=\frac{\mathrm{e}^{(1-\a)^2+1-\a}}{(1-\a)N^{1-\a}}.
\end{eqnarray*}
Finally, assuming $L_2=\displaystyle\max_{\t \in [a,t]}\left|x^{(2)}(\t)\right|$,
we conclude that
\begin{equation*}
|E_{tr}(t)|\leq L_2 \frac{\mathrm{e}^{(1-\a)^2
+1-\a}}{\Gamma(2-\a)(1-\a)N^{1-\a}} (t-a)^{2-\a}.
\end{equation*}

\begin{remark}
Following similar techniques, one can extract an error bound
for the approximations of Hadamard derivatives. When
we consider finite sums in \eqref{HadAprx},
the error is bounded by
$$
\left| E_{tr}(t)\right|\leq L(t)\frac{e^{(1-\a)^2
+1-\a}}{\Gamma(2-\a)(1-\a)N^{1-\a}}\left(\ln\frac{t}{a}\right)^{1-\a}(t-a),
$$
where
$$
L(t)=\max_{\tau\in[a,t]}|\dot{x}(\tau)+\t\ddot{x}(\t)|.
$$
\end{remark}


\section{Direct Methods}

There are two main classes of direct methods in the classical calculus
of variations and optimal control. On the one hand, we specify
a discretization scheme by choosing a set of mesh points on the horizon
of interest, say $a=t_0,t_1,\ldots,t_n=b$ for $[a,b]$. Then we use some
approximations for derivatives in terms of unknown function values
at $t_i$ and, using an appropriate quadrature, the problem is transformed
to a finite dimensional optimization problem. This method is known
as Euler's method in the literature \cite{Elsgolts}. Regarding
Figure~\ref{EulerMethod}, the solid line is the function that we are looking for,
nevertheless, the method gives the polygonal dashed line as an approximate solution.
\begin{figure}[!htp]
\begin{center}
\setlength\fboxrule{0pt}
\fbox{\includegraphics[scale=1]{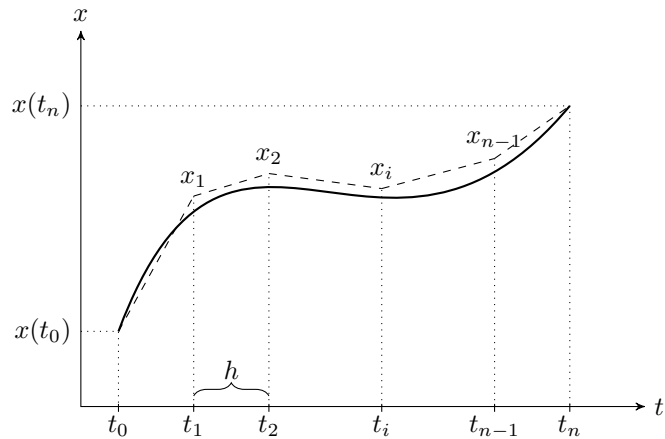}}
\end{center}
\caption{Euler's finite differences method.}\label{EulerMethod}
\end{figure}

On the other hand, there is the Ritz method, that has an extension to functionals
of several independent variables which is called Kantorovich's method. We assume
that the admissible  functions can be expanded in some kind of series,
\textrm{e.g.}, power or Fourier's series, of the form
$$
x(t)=\sum_{k=0}^\infty a_k \phi_k(t).
$$
Using a finite number of terms in the sum as an approximation, and some sort
of quadrature again, the original problem can be transformed to an equivalent
optimization problem for $a_k$, $k=0,1,\ldots,n$.

In the presence of fractional operators, the same ideas are applied to discretize
a problem. Many works can be found in the literature that use different types
of basis functions to establish Ritz-like methods for fractional calculus
of variations and optimal control.


\subsection{Euler-like Methods}

The Euler method in the classical theory of the calculus of variations uses finite
differences approximations for derivatives and is referred also as the method
of finite differences. The basic idea of this method is that instead
of considering the values of a functional
\begin{equation*}
J[x(\cdot)]=\int_a^b L(t, x(t), \dot{x}(t))dt
\end{equation*}
with boundary conditions $x(a)=x_a$ and $ x(b)=x_b$, on arbitrary admissible curves,
we only track the values at an $n+1$ grid points, $t_i$, $i=0,\ldots,n$,
of the interested time interval \cite{fda12}. The functional $J[x(\cdot)]$
is then transformed into a function $\Psi(x(t_1),x(t_2),\ldots,x(t_{n-1}))$
of the values of unknown function on mesh points. Assuming
$h=t_{i}-t_{i-1}$, $x(t_i)=x_i$
and $\dot{x}_i\approx \frac{x_{i}-x_{i-1}}{h}$, one has
\begin{eqnarray*}
J[x(\cdot)]&\approx&\Psi(x_1,x_2,\ldots,x_{n-1})
=h\sum_{i=1}^{n}L\left(t_i, x_i,\frac{x_{i}-x_{i-1}}{h}\right),\\
&&x_0=x_a,\quad x_n=x_b.
\end{eqnarray*}
The desired values of $x_i$, $i=1,\ldots,n-1$, are the extremum
of the multi-variable function $\Psi$ which is the solution to the system
$$
\frac{\partial \Psi}{\partial x_i}=0,\quad i=1,\ldots,n-1.
$$

The fact that only two terms in the sum, $(i-1)$th and $i$th, depend
on $x_i$, makes it rather easy to find the extremum of $\Psi$
solving a system of algebraic equations. For each $n$, we obtain
a polygonal line which is an approximate solution of the original problem.
It has been shown that passing to the limit as $h\rightarrow 0$,
the linear system corresponding to finding the extremum of $\Psi$
is equivalent to the Euler--Lagrange equation of the problem.


\subsubsection{Finite Differences for Fractional Derivatives}

In classical theory, given a derivative of a certain order, $x^{(n)}$,
there is a finite difference approximation of the form
\begin{equation*}
x^{(n)}(t)=\lim_{h\rightarrow 0^+} \frac{1}{h^n}
\sum_{k=0}^n(-1)^k\binom{n}{k}x(t-kh),
\end{equation*}
where $\binom{n}{k}$  is the binomial coefficient and
\begin{equation*}
\binom{n}{k}=\frac{n(n-1)(n-2)\cdots (n-k+1)}{k!},\quad n,k \in \mathbb{N}.
\end{equation*}
The Gr\"{u}nwald--Letnikov definition of fractional derivative is a generalization
of this formula to derivatives of arbitrary order.

The series in \eqref{LGLdef} and \eqref{RGLdef}, the Gr\"{u}nwald--Letnikov definitions,
converge absolutely and uniformly if $x(\cdot)$ is bounded. The infinite sums,
backward differences for the left and forward differences for the right derivative
in the Gr\"{u}nwald--Letnikov definitions for fractional derivatives, reveals
that the arbitrary order derivative of a function at a time $t$ depends
on all values of that function in $(-\infty,t]$ and $[t,\infty)$, for left
and right derivatives respectively. This is due to the non-local
property of fractional derivatives.

\begin{remark}
Equations \eqref{LGLdef} and \eqref{RGLdef} need to be consistent in closed
time intervals and we need the values of $x(t)$ outside the interval $[a,b]$.
To overcome this difficulty, we can take
\begin{equation*}
x^*(t)=\left\{
\begin{array}{ll}
x(t)& t\in [a,b],\\
0 & t\notin [a,b].
\end{array}\right.
\end{equation*}
Then we assume $\GLa x(t)=\GLa x^*(t)$
and $\GLb x(t)=\GLb x^*(t)$ for $t\in [a,b]$.
\end{remark}

This definition coincides with Riemann--Liouville and Caputo derivatives.
The latter is believed to be more applicable in practical fields
such as engineering and physics.

\begin{proposition}[See \cite{Podlubny}]
Let $0< \a<n$, $n\in \mathbb{N}$ and $x(\cdot)\in C^{n-1}[a,b]$.
Suppose also that $x^{(n)}(\cdot)$ is integrable on $[a,b]$. Then,
for every $\a$, the  Riemann--Liouville derivative exists and coincides
with the Gr\"{u}nwald--Letnikov derivative and the following holds:
\begin{eqnarray*}
\LDa x(t)&=&\sum_{i=0}^{n-1}\frac{x^{(i)}(a)(t-a)^{i-\a}}{\Gamma(1+i-\a)}
+ \frac{1}{\Gamma(n-\a)}\int_a^t(t-\t)^{n-1-\a}x^{(n)}(\t)d\t\\
&=&\GLa x(t).
\end{eqnarray*}
\end{proposition}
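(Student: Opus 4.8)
The plan is to establish the Riemann--Liouville expansion formula
$$
\LDa x(t)=\sum_{i=0}^{n-1}\frac{x^{(i)}(a)(t-a)^{i-\a}}{\Gamma(1+i-\a)}
+\frac{1}{\Gamma(n-\a)}\int_a^t (t-\t)^{n-1-\a}x^{(n)}(\t)d\t
$$
by repeated integration by parts, and then to identify this with the Gr\"unwald--Letnikov derivative by invoking the known equivalence of the two definitions under the stated regularity. The starting point is the definition \eqref{LeftD} of $\LDa$, written as
$$
\LDa x(t)=\frac{1}{\Gamma(1-\a)}\frac{d}{dt}\int_a^t (t-\t)^{-\a}x(\t)d\t.
$$
First I would generalize \eqref{DecThm} of Lemma~\ref{LemRL} to order $n$: the hypothesis $x(\cdot)\in C^{n-1}[a,b]$ with $x^{(n)}$ integrable is exactly what justifies carrying the integration by parts through $n$ successive steps. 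Each integration by parts peels off one boundary term at $\t=a$ of the form $x^{(i)}(a)(t-a)^{i-\a}/\Gamma(1+i-\a)$ (the boundary contribution at $\t=t$ vanishes because of the factor $(t-\t)$ raised to a positive power once $i\ge 1$), while raising the exponent of $(t-\t)$ and differentiating $x$ once more inside the remaining integral.

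\medskip
\noindent The key steps, in order, are the following. \emph{Step one:} rewrite the outer $d/dt$ acting on the convolution integral; differentiating under the integral sign together with the boundary term produces, after the first integration by parts, the $i=0$ term $x(a)(t-a)^{-\a}/\Gamma(1-\a)$ plus an integral against $\dot{x}$ with kernel $(t-\t)^{-\a}/\Gamma(1-\a)$, which is precisely \eqref{DecThm}. \emph{Step two:} iterate. At the $i$th stage one integrates by parts the term
$$
\frac{1}{\Gamma(i-\a)}\int_a^t (t-\t)^{i-1-\a}x^{(i)}(\t)d\t,
$$
using $dv=(t-\t)^{i-1-\a}d\t$, $v=-(t-\t)^{i-\a}/(i-\a)$, and the gamma recurrence $(i-\a)\Gamma(i-\a)=\Gamma(i+1-\a)$ to keep the constants in the stated normalized form. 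The boundary evaluation at $\t=a$ yields exactly $x^{(i)}(a)(t-a)^{i-\a}/\Gamma(i+1-\a)$. \emph{Step three:} stop after $n$ iterations, at which point the residual integral is
$$
\frac{1}{\Gamma(n-\a)}\int_a^t (t-\t)^{n-1-\a}x^{(n)}(\t)d\t,
$$
which is well defined as a Lebesgue integral since $n-1-\a>-1$ and $x^{(n)}$ is integrable. This produces the first displayed equality. \emph{Step four:} appeal to the cited result of Podlubny \cite{Podlubny} for the equality $\LDa x(t)=\GLa x(t)$, which holds precisely because both the Riemann--Liouville integral representation above and the Gr\"unwald--Letnikov limit \eqref{LGLdef} converge to the same value under the assumption $x\in C^{n-1}$ with integrable $x^{(n)}$.

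\medskip
\noindent \textbf{The main obstacle} will be justifying the interchange of the outer differentiation $d/dt$ with the integral in the very first step when $0<\a<1$ forces a singular kernel $(t-\t)^{-\a}$ at the upper limit, and tracking that no boundary term at $\t=t$ is lost. I would handle this by first establishing the order-$n$ analogue of \eqref{DecThm} for the smoothed integral (writing the convolution as an integral against $\dot{x}$ before differentiating, as in the proof of the previous theorem), so that the differentiation acts on an integral whose integrand is integrable and whose boundary behaviour is controlled; the singular factor then appears only inside a convergent integral and never needs to be evaluated pointwise at $\t=t$. The remaining steps are routine applications of integration by parts and the functional equation $\Gamma(z+1)=z\Gamma(z)$, and the final identification with $\GLa$ is quoted rather than reproved.
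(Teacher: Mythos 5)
First, a remark about the comparison itself: the paper never proves this proposition. It is stated with the citation to \cite{Podlubny} and used as a quoted auxiliary fact to justify the finite-difference approximation \eqref{GLApprx}, so your attempt must be judged on its own merits rather than against a proof in the text. Your Steps one through three are essentially correct, and they reproduce machinery the paper does use elsewhere: the first integration by parts is exactly Lemma~\ref{LemRL} (equation \eqref{DecThm}), and the second is exactly how the paper obtains \eqref{exp2} in the proof of the moment expansion. The vanishing of the boundary terms at $\t=t$ for $i\geq 1$, the recurrence $(i-\a)\Gamma(i-\a)=\Gamma(i+1-\a)$, and the hypotheses $x(\cdot)\in C^{n-1}[a,b]$ with $x^{(n)}$ integrable are what is needed to iterate and to make the remainder integral converge, so the first displayed equality is established. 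One caveat: you start from \eqref{LeftD}, which defines $\LDa$ only for $\a\in(0,1)$, whereas the proposition is stated for any $0<\a<n$; when $\a\geq 1$ the Riemann--Liouville derivative carries $\lceil\a\rceil$ outer derivatives, and your induction would have to be restructured from that definition rather than from the order-one formula \eqref{DecThm}.

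The genuine gap is Step four. The substantive content of the proposition --- the part the chapter actually relies on --- is that the Gr\"{u}nwald--Letnikov limit \eqref{LGLdef} exists and coincides with $\LDa x(t)$ under these weak regularity hypotheses; the integral representation is the easy half. You dispose of the equivalence by ``appealing to the cited result of Podlubny'', but this proposition \emph{is} that cited result, so the step is circular: it invokes the very statement being proved. A self-contained argument requires discrete analogues of your Steps one to three, none of which appear in your outline: one performs summation by parts $n$ times on the fractional difference quotient $h^{-\a}\sum_{k}(-1)^k\binom{\a}{k}x(t-kh)$ (the discrete counterpart of your repeated integration by parts), and then passes to the limit $h\to 0^+$ using the asymptotics $(-1)^k\binom{\a}{k}=\frac{\Gamma(k-\a)}{\Gamma(-\a)\Gamma(k+1)}\sim k^{-\a-1}/\Gamma(-\a)$ together with a Toeplitz-type convergence theorem that identifies the limits of the resulting weighted sums with the boundary terms and the integral in your representation. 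Without that machinery, what you have proved is the representation formula for the Riemann--Liouville derivative, not its coincidence with the Gr\"{u}nwald--Letnikov derivative.
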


\begin{remark}
For numerical purposes we need a finite series in \eqref{LGLdef}. Given
a grid on $[a,b]$ as $a=t_0,t_1,\ldots,t_n=b$, where $t_i=t_0+ih$
for some $h>0$, we approximate the left Riemann--Liouville  derivative as
\begin{equation}
\label{GLApprx}
\LDa x(t_i)\approx \frac{1}{h^\a} \sum_{k=0}^{i}\w x(t_i-kh),
\end{equation}
where $\w=(-1)^k\binom{\a}{k}=\frac{\Gamma(k-\a)}{\Gamma(-\a)\Gamma(k+1)}$.

Similarly, one can approximate the right Riemann--Liouville  derivative by
\begin{equation}
\label{GLApprxR}
\RD x(t_i)\approx \frac{1}{h^\a} \sum_{k=0}^{n-i}\w x(t_i+kh).
\end{equation}
\end{remark}

\begin{remark}
The Gr\"{u}nwald--Letnikov approximation of Riemann--Liouville
is a first order approximation \cite{Podlubny}, i.e.,
\begin{equation*}
\LDa x(t_i)= \frac{1}{h^\a} \sum_{k=0}^{i}\w x(t_i-kh)+\mathcal{O}(h).
\end{equation*}
\end{remark}

\begin{remark}
It has been shown that the implicit Euler method solution to a certain fractional
partial differential equation based on the Gr\"{u}nwald--Letnikov approximation
to the fractional derivative, is unstable \cite{Meerschaert}. Therefore,
discretizing fractional derivatives,  shifted Gr\"{u}nwald--Letnikov derivatives
are used and, despite the slight difference, they exhibit a stable performance,
at least for certain cases. The shifted Gr\"{u}nwald--Letnikov derivative is defined by
\begin{equation*}
\sGLa x(t_i)\approx \frac{1}{h^\a} \sum_{k=0}^{i}\w x(t_i-(k-1)h).
\end{equation*}
\end{remark}

Other finite difference approximations can be found in the literature.
We refer here to the Diethelm backward finite difference formula for Caputo's
fractional derivative, with $0 < \a < 2$ and $\a \ne 1$,
which is an approximation of order $\mathcal{O}(h^{2-\a})$ \cite{Ford}:
$$
\LDC x(t_i)\approx \frac{h^{-\a}}{\Gamma(2-\a)}\sum_{j=0}^i a_{i,j}\left(x_{i-j}
-\sum_{k=0}^{\lfloor\a\rfloor}\frac{(i-j)^kh^k}{k!}x^{(k)}(a)\right),
$$
where
$$
a_{i,j}=\left \{
\begin{array}{ll}
1,& \text{if } i=0,\\
(j+1)^{1-\a}-2j^{1-\a}+(j-1)^{1-\a},&\text{if } 0<j<i,\\
(1-\a)i^{-\a}-i^{1-\a}+(i-1)^{1-\a},&\text{if } j=i.
\end{array}\right.
$$


\subsubsection{Euler-like Direct Method for Fractional Variational Problems}

As mentioned earlier, we consider a simple version of fractional variational
problems where the fractional term has a Riemann--Liouville form on a finite
time interval $[a,b]$. The boundary conditions are given and we approximate
the problem using the Gr\"{u}nwald--Letnikov approximation given by \eqref{GLApprx}.
In this context, we discretize the functional in \eqref{Functional} using
a simple quadrature rule on the mesh points, $a=t_0,t_1,, \ldots,t_n=b$,
with $h=\frac{b-a}{n}$. The goal is to find the values $x_1,x_2,\ldots,x_{n-1}$
of the unknown function $x(\cdot)$ at points $t_i$, $i=1,\ldots,n-1$.
The values of $x_0$ and $x_n$ are given. Applying the quadrature rule gives
\begin{eqnarray*}
J[x(\cdot)]&=& \sum_{i=1}^{n}\int_{t_{i-1}}^{t_{i}} L(t_i, x_i, \LDa x_i)dt
           \approx \sum_{i=1}^{n} h L(t_i, x_i, \LDa x_i)
\end{eqnarray*}
and by approximating the fractional derivatives
at mesh points using \eqref{GLApprx} we have
\begin{equation}
\label{disFuncl}
J[x(\cdot)]\approx \sum_{i=1}^{n}hL\left(t_i, x_i,
\frac{1}{h^\a} \sum_{k=0}^{i}\w x_{i-k}\right).
\end{equation}
Hereafter the procedure is the same as in the classical case.
The right-hand-side of \eqref{disFuncl} can be regarded as a
function $\Psi$ of $n-1$ unknowns $\mathbf{x}=(x_1,x_2,\ldots,x_{n-1})$,
\begin{equation}
\label{sumFrac}
\Psi(\mathbf{x})=\sum_{i=1}^{n}hL\left(t_i, x_i,
\frac{1}{h^\a} \sum_{k=0}^{i}\w x_{i-k}\right).
\end{equation}

To find an extremum for $\Psi$, one has to solve
the following system of algebraic equations:
\begin{equation}
\label{AlgSys}
\frac{\partial \Psi}{\partial x_i}=0,\qquad i=1,\ldots,n-1.
\end{equation}
Unlike the classical case, all terms, starting from the
$i$th term in \eqref{sumFrac}, depend on $x_i$ and we have
\begin{equation}
\label{GenSys}
\frac{\partial \Psi}{\partial x_i}
=h\frac{\partial L}{\partial x}(t_i,x_i,\LDa x_i)
+h\sum_{k=0}^{n-i}\frac{1}{h^\a}\w
\frac{\partial L}{\partial~\LDa x}(t_{i+k},x_{i+k},\LD x_{i+k}).
\end{equation}
Equating the right-hand-side of \eqref{GenSys} with zero, one has
\begin{equation*}
\frac{\partial L}{\partial x}(t_i,x_i,\LDa x_i)+\frac{1}{h^\a}
\sum_{k=0}^{n-i}\w\frac{\partial L}{\partial~\LDa x}(t_{i+k},x_{i+k},\LDa x_{i+k})=0.
\end{equation*}
Passing to the limit, and considering the approximation formula
for the right Riemann--Liouville derivative, equation \eqref{GLApprxR},
it is straightforward to verify that:

\begin{thm}
The Euler-like method for a fractional variational problem
of the form \eqref{Functional} is equivalent to the
fractional Euler--Lagrange equation
\begin{equation*}
\frac{\partial L}{\partial x}+\RD\frac{\partial L}{\partial~\LDa x}=0,
\end{equation*}
as the mesh size, $h$, tends to zero.
\end{thm}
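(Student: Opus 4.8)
The plan is to begin from the discrete optimality system \eqref{AlgSys}. The computation preceding the statement has already expanded $\partial\Psi/\partial x_i$ as in \eqref{GenSys}; equating it with zero and dividing by $h$ gives, for $i=1,\ldots,n-1$,
$$
\frac{\partial L}{\partial x}(t_i,x_i,\LDa x_i)+\frac{1}{h^\a}
\sum_{k=0}^{n-i}\w\frac{\partial L}{\partial~\LDa x}(t_{i+k},x_{i+k},\LDa x_{i+k})=0.
$$
The whole argument then rests on recognizing the two summands as consistent discretizations of continuous operators evaluated along the extremal.

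First I would treat the second summand. Writing $g(t):=\frac{\partial L}{\partial~\LDa x}\bigl(t,x(t),\LDa x(t)\bigr)$ and using $t_{i+k}=t_i+kh$ together with $x_{i+k}\approx x(t_{i+k})$ and $\LDa x_{i+k}\approx\LDa x(t_{i+k})$, that sum is precisely the right Gr\"unwald--Letnikov finite difference \eqref{GLApprxR} of $g$ with base point $t_i$ and far endpoint $t_n=b$. By the convergence of \eqref{GLApprxR} to the right Riemann--Liouville derivative, this sum tends to $\RD g(t_i)=\RD\frac{\partial L}{\partial~\LDa x}$ as $h\to0$, while the first summand tends to $\frac{\partial L}{\partial x}\bigl(t,x(t),\LDa x(t)\bigr)$ by continuity of the partials of $L$.

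To organize the limit I would fix $t\in(a,b)$ and, along a refinement $h=(b-a)/n\to0$, choose indices $i=i(n)$ with $t_{i(n)}\to t$; the base point of the right-derivative sum then approaches $t$ while its far endpoint remains $b$, matching exactly the domain $[t,b]$ of $\RD$. Passing to the limit term by term yields
$$
\frac{\partial L}{\partial x}+\RD\frac{\partial L}{\partial~\LDa x}=0,
$$
which is the fractional Euler--Lagrange equation and proves the asserted equivalence.

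The step I expect to be the genuine obstacle is making this limit passage rigorous rather than formal. One must know that the discrete minimizers $x_i$ converge to the continuous extremal and that the Gr\"unwald--Letnikov sums, evaluated along the discrete data instead of along the exact solution, still converge to $\RD g(t)$. Securing this requires the regularity under which the Gr\"unwald--Letnikov and Riemann--Liouville derivatives coincide (as in the Proposition, $x\in C^{n-1}[a,b]$ with integrable $n$th derivative) and continuity of $\partial L/\partial x$ and $\partial L/\partial~\LDa x$. At the formal level adopted here, however, identifying the finite sum with \eqref{GLApprxR} and invoking its consistency is the entire content of the proof.
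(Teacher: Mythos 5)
Your proposal follows essentially the same route as the paper's proof: derive the discrete Euler--Lagrange system (the paper re-derives it inside the proof via variations $\eta_i$ and a reordering of the double sum, but it is the same equation \eqref{GenSys}$=0$ you start from), recognize the remaining sum as the Gr\"unwald--Letnikov discretization \eqref{GLApprxR} of $\RD$ applied to $\partial L/\partial\,\LDa x$, and pass to the limit at a fixed $\overline t\in\,]a,b[$ with indices $i(n)$ chosen so that $t_{i(n)}\to\overline t$. The obstacle you flag is handled in the paper exactly as you anticipate: the convergence of the discrete minimizers to a continuous function $\overline x$ is \emph{assumed}, and continuity then yields the limit of the Gr\"unwald--Letnikov sums, so your argument is at the same level of rigor as the paper's.
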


\begin{proof}
Consider a minimizer $(x_1,\ldots,x_{n-1})$ of $\Psi$, a variation function
$\eta\in C[a,b]$ with $\eta(a)=\eta(b)=0$ and define $\eta_i=\eta(t_i)$,
for $i=0,\ldots,n$. We remark that $\eta_0=\eta_n=0$ and that
$(x_1+\epsilon\eta_1,\ldots,x_{n-1}+\epsilon\eta_{n-1})$ is a variation of
$(x_1,\ldots,x_{n-1})$, with $|\epsilon|<r$, for some fixed $r>0$. Therefore,
since $(x_1,\ldots,x_{n-1})$ is a minimizer for $\Psi$, proceeding
with Taylor's expansion, we deduce that
\begin{eqnarray*}
0&\leq& \Psi(x_1+\epsilon\eta_1,\ldots,x_{n-1}+\epsilon\eta_{n-1})-\Psi(x_1,\ldots,x_{n-1})\\
&=&\epsilon\sum_{i=1}^n h\left[ \frac{\partial L}{\partial x}[i]\eta_i
+\frac{\partial L}{\partial {_aD_t^{\a}}}[i] \frac{1}{h^\a}
\sum_{k=0}^i(\omega^\a_k) \eta_{i-k} \right]+\mathcal{O}(\epsilon),
\end{eqnarray*}
where
$$
[i]=\left(t_i,x_i,\frac{1}{h^\a}
\sum_{k=0}^i(\omega^\a_k) x_{i-k} \right).
$$
Since $\epsilon$ takes any value, it follows that
\begin{equation}
\label{sum1}
\sum_{i=1}^n h\left[ \frac{\partial L}{\partial x}[i]\eta_i
+\frac{\partial L}{\partial {_aD_t^{\a}}}[i] \frac{1}{h^\a}
\sum_{k=0}^i(\omega^\a_k) \eta_{i-k} \right]=0.
\end{equation}
On the other hand, since $\eta_0=0$, reordering
the terms of the sum, it follows immediately that
$$
\sum_{i=1}^n \frac{\partial L}{\partial {_aD_t^{\a}}}[i]
\sum_{k=0}^i(\omega^\a_k) \eta_{i-k}= \sum_{i=1}^n \eta_i
\sum_{k=0}^{n-i}(\omega^\a_k)  \frac{\partial L}{\partial {_aD_t^{\a}}}[i+k].
$$
Substituting this relation into equation \eqref{sum1}, we obtain
$$
\sum_{i=1}^n\eta_i h\left[ \frac{\partial L}{\partial x}[i]
+\frac{1}{h^\a} \sum_{k=0}^{n-i}(\omega^\a_k)
\frac{\partial L}{\partial {_aD_t^{\a}}}[i+k] \right]=0.
$$
Since $\eta_i$ is arbitrary, for $i=1,\ldots,n-1$, we deduce that
$$
\frac{\partial L}{\partial x}[i]+\frac{1}{h^\a} \sum_{k=0}^{n-i}(\omega^\a_k)
\frac{\partial L}{\partial {_aD_t^{\a}}}[i+k]=0, \quad \mbox{for } i=1,\ldots,n-1.
$$
Let us study the case when $n$ goes to infinity. Let $\overline t \in ]a,b[$
and $i\in\{1,\ldots,n\}$ such that $t_{i-1}<\overline t \leq t_i$. First observe
that, in such case, we also have $i\to\infty$ and $n-i\to\infty$. In fact,
let $i\in\{1,\ldots,n\}$ be such that
$$
a+(i-1)h<\overline t\leq a+ih.
$$
So, $i<(\overline t-a)/h+1$, which implies that
$$
n-i>n\frac{b-\overline t}{b-a}-1.
$$
Then
$$
\lim_{n\to\infty,i\to\infty}t_i=\overline t.
$$
Assume that there exists a function $\overline x\in C[a,b]$ satisfying
$$
\forall \epsilon>0\, \exists N \, \forall n\geq N \,: |x_i-\overline x(t_i)|
<\epsilon, \quad \forall i=1,\ldots,n-1.
$$
As $\overline x$ is uniformly continuous, we have
$$
\forall \epsilon>0\, \exists N \, \forall n\geq N \,: |x_i-\overline x(\overline t)|
<\epsilon, \quad \forall i=1,\ldots,n-1.
$$
By the continuity assumption of $\overline x$, we deduce that
$$
\lim_{n\to\infty,i\to\infty}\frac{1}{h^\a} \sum_{k=0}^{n-i}(\omega^\a_k)
\frac{\partial L}{\partial {_aD_t^{\a}}}[i+k]={_tD^{\a}_b }
\frac{\partial L}{\partial {_aD_t^{\a}}}(\overline t,
\overline x (\overline t),{_aD_{\overline t}^{\a}} \overline x(\overline t)).
$$
For $n$ sufficiently large (and therefore $i$ also sufficiently large),
$$
\lim_{n\to\infty,i\to\infty}\frac{\partial L}{\partial x}[i]
=\frac{\partial L}{\partial x}(\overline t, \overline x (\overline t),
{_aD_{\overline t}^{\a}} \overline x(\overline t)).
$$
In conclusion,
\begin{equation}
\label{fracELE}
\frac{\partial L}{\partial x}(\overline t, \overline x (\overline t),
{_aD_{\overline t}^{\a}} \overline x(\overline t))+{_tD^{\a}_b }
\frac{\partial L}{\partial {_aD_t^{\a}}}(\overline t,
\overline x (\overline t),{_aD_{\overline t}^{\a}} \overline x(\overline t))=0.
\end{equation}
Using the continuity condition, we prove that the fractional Euler--Lagrange
equation \eqref{fracELE} holds for all values on the closed interval $a\leq t\leq b$.
\end{proof}


\subsubsection{Examples}

Now we apply the Euler-like direct method to some test problems
for which the exact solutions are known. Although we propose
problems for the interval $[0,1]$, moving to arbitrary intervals
is only a matter of more computations. To measure the errors related
to approximations, different norms can be used. Since a direct method
seeks for the function values at certain points, we use the maximum norm
to determine how close we can get to the exact value at that point.
Assume that the exact value of the function $x(\cdot)$, at the point
$t_i$, is $x(t_i)$ and it is approximated by $x_i$. The error is defined as
$$
E=\max\{|x(t_i)-x_i|,~i=1,\cdots,n-1\}.
$$

\begin{example}
\label{Example1}
Our goal here is to minimize a quadratic Lagrangian on $[0,1]$
with fixed boundary conditions. Consider the following minimization problem:
\begin{equation}
\label{Exmp1}
\left\{
\begin{array}{l}
J[x(\cdot)]=\int_0^1 \left(\LDz x(t)-\frac{2}{\Gamma(2.5)}t^{1.5}\right)^2 dt \rightarrow \min\\
x(0)=0,~x(1)=1.
\end{array}
\right.
\end{equation}
Since the Lagrangian is always positive,
problem \eqref{Exmp1} attains its minimum when
$$
\LDz x(t)-\frac{2}{\Gamma(2.5)}t^{1.5}=0
$$
and has the obvious solution of the form $x(t)=t^2$
because $\LDz t^2=\frac{2}{\Gamma(2.5)}t^{1.5}$.
\end{example}

To begin with, we approximate the fractional derivative by
\begin{equation*}
\LDz x(t_i)\approx \frac{1}{h^{0.5}} \sum_{k=0}^{i}\wh x(t_i-kh)
\end{equation*}
for a fixed $h>0$. The functional is now transformed into
\begin{equation*}
J[x(\cdot)]=\int_0^1 \left(\frac{1}{h^{0.5}}
\sum_{k=0}^{i}\wh x_{i-k}-\frac{2}{\Gamma(2.5)}t^{1.5}\right)^2dt.
\end{equation*}
Finally, we approximate the integral by a rectangular rule
and end with the discrete problem
\begin{equation*}
\Psi(\mathbf{x})=\sum_{i=1}^{n}h\left(\frac{1}{h^{0.5}}
\sum_{k=0}^{i}\wh x_{i-k}-\frac{2}{\Gamma(2.5)}t^{1.5}_i\right)^2.
\end{equation*}
Since the Lagrangian in this example is quadratic, system \eqref{AlgSys}
has a linear form and therefore is easy to solve. Other problems may end
with a system of nonlinear equations. Simple calculations lead to the system
\begin{equation}
\label{Ex1Sys}
\mathbf{A}\mathbf{x}=\mathbf{b},
\end{equation}
in which
\begin{equation*}
\mathbf{A}=\left[\begin{array}{llll}
\sum_{i=0}^{n-1}A_i^2        & \sum_{i=1}^{n-1}A_{i}A_{i-1}
& \cdots & \sum_{i=n-2}^{n-1}A_{i}A_{i-(n-2)} \\
\sum_{i=0}^{n-2}A_{i}A_{i+1} & \sum_{i=1}^{n-2}A_i^2
& \cdots & \sum_{i=n-3}^{n-2}A_{i}A_{i-(n-3)} \\
\sum_{i=0}^{n-3}A_{i}A_{i+2} & \sum_{i=1}^{n-3}A_{i}A_{i+1}
& \cdots & \sum_{i=n-4}^{n-3}A_{i}A_{i-(n-4)} \\
\vdots                       & \vdots
& \ddots & \vdots                        \\
\sum_{i=0}^{1}A_{i}A_{i+n-2} & \sum_{i=0}^{1}A_{i}A_{i+n-3}
& \cdots & \sum_{i=0}^{1}A_i^2
\end{array}\right],
\end{equation*}
where $A_i=(-1)^{i}h^{1.5}\binom{0.5}{i}$
and $\mathbf{b}=(b_1,b_2,\cdots,b_{n-1})$ with
\begin{equation*}
b_i=\sum_{k=0}^{n-i}\frac{2h^2A_k}{\Gamma(2.5)}t_{k+i}^{1.5}
-A_{n-i}A_0-\left(\sum_{k=0}^{n-i}A_kA_{k+i}\right).
\end{equation*}
Since system \eqref{Ex1Sys} is linear, it is easily solved for different
values of $n$. As indicated in Figure~\ref{Ex1Fig}, by increasing
the value of $n$ we get better solutions.
\begin{figure}[!tp]
\begin{center}
\includegraphics[scale=.8]{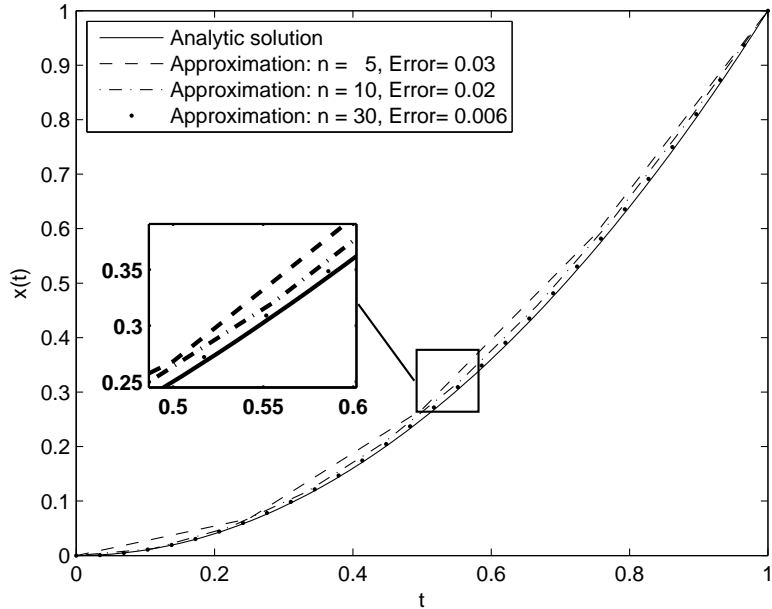}
\end{center}
\caption{Analytic and approximate solutions of Example~\ref{Example1}.}\label{Ex1Fig}
\end{figure}

Let us now move to another example for which the solution
is obtained by the fractional Euler--Lagrange equation.

\begin{example}
\label{Example2}
Consider the following minimization problem:
\begin{equation}
\label{Exmp2}
\left\{
\begin{array}{l}
J[x(\cdot)]=\int_0^1 \left(\LDz x(t)-\dot{x}^2(t)\right) dt \rightarrow \min\\
x(0)=0,~x(1)=1.
\end{array}
\right.
\end{equation}
In this case the only way to get a solution is by use of Euler--Lagrange equations.
The Lagrangian depends not only on the fractional derivative, but also on the first
order derivative of the function. The Euler--Lagrange equation for this setting becomes
\begin{equation*}
\frac{\partial L}{\partial x}+\RD \frac{\partial L}{\partial \LDa}
-\frac{d}{dt}\left(\frac{\partial L}{\partial \dot{x}}\right)=0,
\end{equation*}
and by direct computations a necessary condition for $x(\cdot)$
to be a minimizer of \eqref{Exmp2} is
$$
_tD_1^{\a} 1+2\ddot{x}(t)=0~\text{ or }~ \ddot{x}(t)
=\frac{1}{2\Gamma(1-\a)}(1-t)^{-\a}.
$$
Subject to the given boundary conditions, the above second order
ordinary differential equation has the solution
\begin{equation}
\label{solEx51}
x(t)=-\frac{1}{2\Gamma(3-\a)}(1-t)^{2-\a}+\left(1
-\frac{1}{2\Gamma(3-\a)}\right)t+\frac{1}{2\Gamma(3-\a)}.
\end{equation}
\end{example}

Discretizing problem \eqref{Exmp2} with the same assumptions
of Example~\ref{Example1} ends in a linear system of the form
\begin{equation}
\label{Ex2Sys}
\left[\begin{array}{ccccccc}
2      & -1    & 0     & 0    & \cdots & 0 & 0\\
-1     & 2     & -1    & 0    & \cdots & 0 & 0\\
0      & -1    & 2     & -1   & \cdots & 0 & 0\\
\vdots & \vdots& \vdots&\vdots& \ddots & \vdots&\vdots \\
0      &  0    & 0     &  0   & \cdots &-1 & 2
\end{array}\right]\left[\begin{array}{c}x_1\\x_2\\x_3\\\vdots\\x_{n-1}\end{array}\right]=
\left[\begin{array}{c}b_1\\b_2\\b_3\\\vdots\\b_{n-1}\end{array}\right],
\end{equation}
where
$$
b_i=\frac{h}{2}\sum_{k=0}^{n-i-1}(-1)^{k}h^{0.5}\binom{0.5}{k},
\qquad i=1,2,\ldots,n-2,
$$
and
$$
b_{n-1}=\frac{h}{2}\sum_{k=0}^{1}\left((-1)^{k}h^{0.5}\binom{0.5}{k}\right)+x_n.
$$
System \eqref{Ex2Sys} is linear and can be solved for any $n$ to reach the desired
accuracy. The analytic solution together with some approximated solutions
are shown in Figure~\ref{Ex2Fig}.
\begin{figure}[!tp]
\begin{center}
\includegraphics[scale=.8]{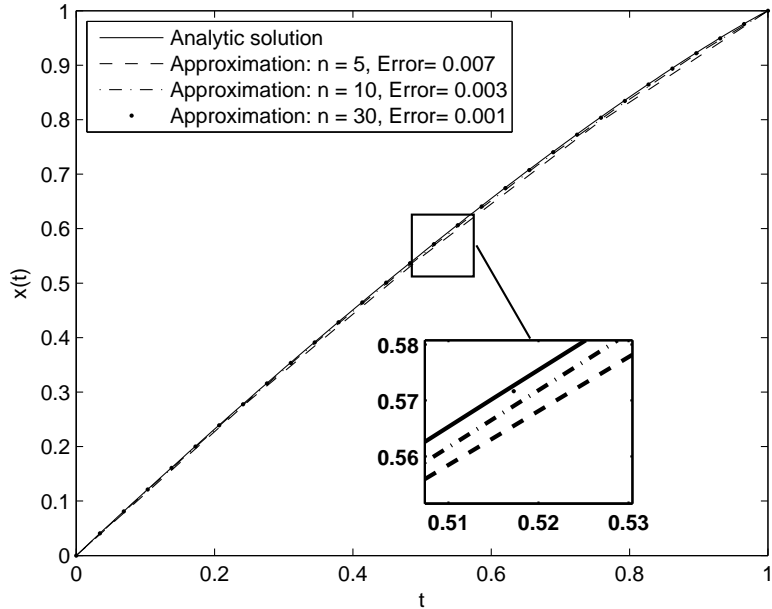}
\end{center}
\caption{Analytic and approximate solutions of Example~\ref{Example2}.}\label{Ex2Fig}
\end{figure}

Both examples above end with linear systems and their solvability is simply dependant
to the matrix of coefficients. Now we try this method on a more complicated problem,
yet analytically solvable, with an oscillating solution.

\begin{example}
\label{Example3}
Consider the problem of minimizing $\int_0^1 L dt$ subject
to the boundary conditions $x(0) = 0$ and $x(1) = 1$,
where the Lagrangian $L$ is given by
\begin{equation*}
L=\left(\LDz x(t)-\frac{16\Gamma(6)}{\Gamma(5.5)}t^{4.5}
+\frac{20\Gamma(4)}{\Gamma(3.5)}t^{2.5}
-\frac{5}{\Gamma(1.5)}t^{0.5}\right)^4.
\end{equation*}
This example has an obvious solution too.
Since $L$ is positive, the minimizer is
\begin{equation*}
x(t)=16t^{5}-20t^{3}+5t.
\end{equation*}
Note that $\LD (t-a)^\nu=\frac{\Gamma(\nu+1)}{\Gamma(\nu+\-\a)}t^{\nu-\a}$.
\end{example}

The appearance of a fourth power in the Lagrangian, results in a nonlinear system
as we apply the Euler-like direct method to this problem. For $j=1,\cdots,n-1$ we have
\begin{equation}
\label{Sys3}
\sum_{i=j}^n \left(\omega_{i-j}^{0.5}\right)\left(\frac{1}{h^{0.5}}
\sum_{k=0}^{i}\wh x_{i-k}-\phi(t_i)\right)^3=0,
\end{equation}
where
$$
\phi(t)=\frac{16\Gamma(6)}{\Gamma(5.5)}t^{4.5}
+\frac{20\Gamma(4)}{\Gamma(3.5)}t^{2.5}-\frac{5}{\Gamma(1.5)}t^{0.5}.
$$
System \eqref{Sys3} is solved for different values
of $n$ and the results are depicted in Figure~\ref{Ex3Fig}.
\begin{figure}[!htp]
\begin{center}
\includegraphics[scale=.8]{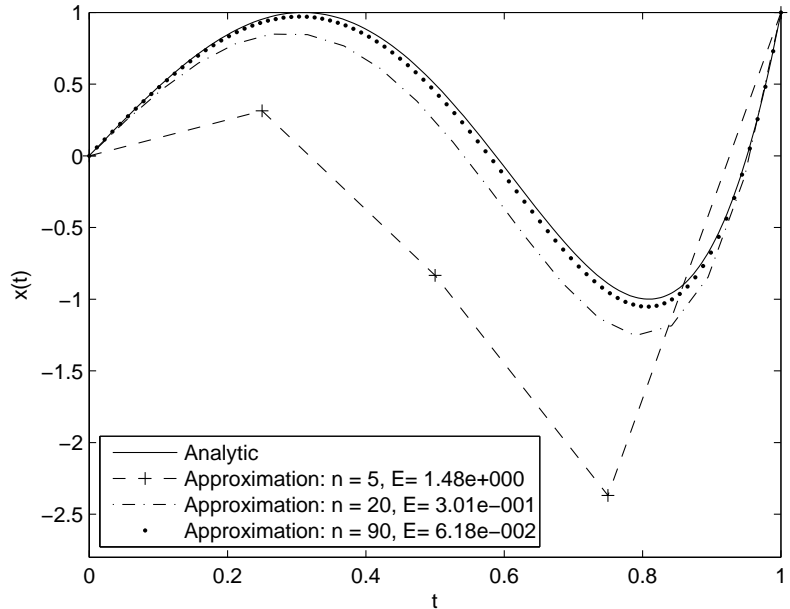}
\end{center}
\caption{Analytic and approximate solutions of Example~\ref{Example3}.}\label{Ex3Fig}
\end{figure}


\section{Indirect Methods}

As in the classical case, indirect methods in fractional sense provide
the necessary conditions of optimality using the first variation.
Fractional Euler--Lagrange equations are now a well-known and well-studied
subject in fractional calculus. For a simple problem of the form \eqref{Functional},
following \cite{Agrawal}, a necessary condition implies that
the solution must satisfy a fractional boundary value differential equation.

\begin{thm}[\bf cf. \cite{Agrawal}]
Let $x(\cdot)$ have a continuous left Riemann--Liouville
derivative of order $\a$ and $J$ be a functional of the form
\begin{equation}
\label{CopyMainProb}
J[x(\cdot)]=\int_a^b L(t, x(t), \LD x(t))dt
\end{equation}
subject to the boundary conditions $x(a)=x_a$ and $x(b)=x_b$.
Then a necessary condition for $J$ to have an extremum for a
function $x(\cdot)$ is that $x(\cdot)$ satisfies
the following Euler–-Lagrange equation:
\begin{equation}
\label{ELeqnIndirect}
\left\{\begin{array}{l}
\frac{\partial L}{\partial x}+\RD\frac{\partial L}{\partial~\LDa x}=0,\\
x(a)=x_a,\quad x(b)=x_b,
\end{array}\right.
\end{equation}
which is called the fractional Euler--Lagrange equation.
\end{thm}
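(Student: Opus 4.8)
The plan is to adapt the classical first-variation argument to the fractional setting, the only genuinely new ingredient being a fractional integration by parts formula. First I would fix an admissible variation: let $\eta\in C[a,b]$, smooth enough that $\LDa\eta$ exists, satisfy $\eta(a)=\eta(b)=0$, so that for every real $\epsilon$ the perturbed curve $x+\epsilon\eta$ still meets the prescribed endpoint data $x(a)=x_a$, $x(b)=x_b$. Since $x(\cdot)$ is assumed to extremize $J$, the real function $j(\epsilon):=J[x+\epsilon\eta]$ has a local extremum at $\epsilon=0$, and a necessary condition is $j'(0)=0$.

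Next I would compute $j'(0)$ by differentiating under the integral sign and exploiting the linearity of the Riemann--Liouville derivative, $\LDa(x+\epsilon\eta)=\LDa x+\epsilon\,\LDa\eta$. This produces the first variation
\begin{equation*}
j'(0)=\int_a^b\left[\frac{\partial L}{\partial x}\,\eta(t)
+\frac{\partial L}{\partial\,\LDa x}\,\LDa\eta(t)\right]dt=0,
\end{equation*}
where the partial derivatives of $L$ are understood to be evaluated along $(t,x(t),\LDa x(t))$.

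The heart of the argument is to shift the fractional derivative off of $\eta$. I would invoke the fractional integration by parts formula
\begin{equation*}
\int_a^b g(t)\,\LDa\eta(t)\,dt=\int_a^b\eta(t)\,\RD g(t)\,dt,
\end{equation*}
which holds under our hypotheses because the boundary contributions are proportional to the endpoint values of $\eta$ (entering through a fractional integral) and therefore vanish once $\eta(a)=\eta(b)=0$. Applying it with $g=\partial L/\partial\,\LDa x$ converts the first variation into $\int_a^b\eta(t)\big[\frac{\partial L}{\partial x}+\RD\frac{\partial L}{\partial\,\LDa x}\big]\,dt=0$. Since $\eta$ is otherwise arbitrary on $]a,b[$, the fundamental lemma of the calculus of variations forces the bracketed factor to vanish identically, which is exactly the Euler--Lagrange equation in \eqref{ELeqnIndirect}; the boundary conditions are simply carried along unchanged to complete the system.

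The main obstacle I anticipate is the rigorous justification of the fractional integration by parts step: one must verify that $\partial L/\partial\,\LDa x$ is regular enough for its right Riemann--Liouville derivative $\RD$ to exist, and confirm that the boundary terms genuinely vanish rather than being merely formal. By contrast, differentiation under the integral sign and the passage from the integral identity to the pointwise equation via the fundamental lemma are routine once the assumed continuity of $\LDa x(\cdot)$ is in force.
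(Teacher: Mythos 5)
Your proposal is correct and follows essentially the same route as the paper's own proof: the variation $x+\epsilon\eta$ with $\eta(a)=\eta(b)=0$, linearity of $\LDa$, vanishing of the first variation at $\epsilon=0$, the fractional integration by parts formula $\int_a^b g(t)\,\LDa f(t)\,dt=\int_a^b f(t)\,\RD g(t)\,dt$, and the fundamental lemma of the calculus of variations. Your closing remark about rigorously justifying the integration by parts step is a fair observation, but the paper likewise treats that step formally, citing the formula without verifying regularity hypotheses.
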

\begin{proof}
Assume that $x^*(t)$ is the desired function and let $x(t)=x^*(t)+\epsilon \eta(t)$
be a family of curves that satisfy boundary conditions, i.e., $\eta(a)=\eta(b)=0$.
Since $\LD$ is a linear operator, for any $x(\cdot)$, the functional becomes
$$
J[x(\cdot)]=\int_a^b L(t, x^*(t)+\epsilon \eta(t), \LD x^*(t)+\epsilon \LD\eta(t))dt,
$$
which is a function of $\epsilon$, $J[\epsilon]$. Since $J$ assumes its extremum
at $\epsilon=0$, one has $\frac{dJ}{d\epsilon}\big |_{\epsilon=0}=0$, i.e.,
$$
\int_a^b\left[\frac{\partial L}{\partial x}\eta
+ \frac{\partial L}{\partial \LD x}\LD\eta\right]dt=0.
$$
Using the fractional integration by parts of the form
$$
\int_a^b g(t)\LD f(t)dt=\int_a^b f(t)\RD g(t)dt
$$
on the second term and applying the fundamental theorem
of the calculus of variations completes the proof.
\end{proof}

\begin{remark}
Many variants of this theorem can be found in the literature.
Different types of fractional terms have been embedded in the
Lagrangian and appropriate versions of Euler--Lagrange equations
have been derived using proper integration by parts formulas.
See \cite{Agrawal,APTIndInt,Atan,Malinowska,ID:207} for details.
\end{remark}

For fractional optimal control problems, a so-called Hamiltonian system
is constructed using Lagrange multipliers. For example, cf. \cite{Ozlem},
assume that we are required  to minimize a functional of the form
$$
J[x(\cdot),u(\cdot)]=\int_a^b L(t, x(t),u(t))dt
$$
such that $x(a)=x_a$, $x(b)=x_b$ and $\LD x(t)=f(t, x(t),u(t))$.
Similar to the classical methods, one can introduce a Hamiltonian
$$
H=L(t, x(t),u(t))+\lambda(t) f(t, x(t),u(t)),
$$
where $\lambda(t)$ is considered as a Lagrange multiplier.
In this case we define the augmented functional as
$$
J[x(\cdot), u(\cdot), \lambda(\cdot)]
=\int_a^b [H(t, x(t),u(t),\lambda(t))-\lambda(t)\LD x(t)]dt.
$$
Optimizing the latter functional results
in the following necessary optimality conditions:
\begin{equation}
\label{HamIndirect}
\left\{\begin{array}{l}
\LD x(t)=\frac{\partial H}{\partial \lambda}\\
\RD \lambda(t)=\frac{\partial H}{\partial x}\\
\frac{\partial H}{\partial u}=0.
\end{array}\right.
\end{equation}
Together with the prescribed boundary conditions,
this makes a two point fractional boundary value problem.

These arguments reveal that, like the classical case, fractional variational problems
end with fractional boundary value problems. To reach an optimal solution, one needs
to deal with a fractional differential equation or a system of fractional differential equations.

The classical theory of differential equations is furnished with several solution methods,
theoretical and numerical. Nevertheless, solving a fractional differential equation
is a rather tough task \cite{Kai}. To benefit those methods, especially all solvers
that are available to solve an integer order differential equation numerically,
we can either approximate a fractional variational problem by an equivalent
integer-order one or approximate the necessary optimality conditions
\eqref{ELeqnIndirect} and \eqref{HamIndirect}. The rest of this section discusses
two types of approximations that are used to transform a fractional problem
to one in which only integer order derivatives are present; i.e., we approximate
the original problem by substituting a fractional term by its corresponding
expansion formulas. This is mainly done by case studies on certain examples.
The examples are chosen so that either they have a trivial solution or it is
possible to get an analytic solution using fractional Euler--Lagrange equations.

By substituting the approximations \eqref{expanInt} or \eqref{expanMom}
for the fractional derivative in \eqref{CopyMainProb}, the problem is transformed to
\begin{eqnarray*}
J[x(\cdot)]&=&\int_a^b L\left(t, x(t), \sum_{k=0}^{N}
\frac{(-1)^{k-1}\a x^{(k)}(t)}{k!(k-\a)\Gamma(1-\a)}(t-a)^{k-\a}\right)dt\\
           &=&\int_a^b L'\left(t, x(t), \dot{x}(t), \ldots,x^{(N)}(t)\right)dt
\end{eqnarray*}
or
\begin{eqnarray*}
J[x(\cdot)]&=&\int_a^b L\left(t, x(t),\frac{Ax(t)}{(t-a)^{\a}}
+\frac{B\dot{x}(t)}{(t-a)^{\a-1}}-\sum_{p=2}^N \frac{C(\a,p)V_p(t)}{(t-a)^{p+\a-1}}\right)dt\\
           &=&\int_a^b L'\left(t, x(t), \dot{x}(t), V_2(t), \ldots,V_N(t)\right)dt\\
           &&\left\{
           \begin{array}{l}
           \dot{V}_p(t)=(1-p)(t-a)^{p-2}x(t)\\
           V_p(a)=0, \qquad p=2,3,\ldots
           \end{array}
           \right.
\end{eqnarray*}
The former problem is a classical variational problem containing higher order derivatives.
The latter is a multi-variable problem, subject to some ordinary differential equation constraint.
Together with the boundary conditions, both above problems belong to classes of well studied variational problems.

To accomplish a detailed study, as test problems, we consider here Example~\ref{Example2},
\begin{equation}
\label{Exmp1Indirect}
\left\{
\begin{array}{l}
J[x(\cdot)]=\int_0^1 \left(\LDz x(t)-\dot{x}^2(t)\right) dt \rightarrow \min\\
x(0)=0,~x(1)=1,
\end{array}
\right.
\end{equation}
and the following example.
\begin{example}
\label{Example4}
Given $\a\in (0,1)$, consider the functional
\begin{equation}
\label{Exmp2Indirect}
J[x(\cdot)]=\int_0^1 (\LD x(t)-1)^2dt
\end{equation}
to be minimized subject to the boundary conditions $x(0)=0$ and $x(1)=\frac{1}{\Gamma(\a+1)}$.
Since the integrand in \eqref{Exmp2Indirect} is non-negative, the functional attains its minimum
when $\LD x(t)=1$, \textrm{i.e.}, for $x(t)=\frac{t^{\a}}{\Gamma(\a+1)}$.
\end{example}

We illustrate the use of the two different expansions separately.


\subsection{Expansion to Integer Orders}

Using approximation \eqref{expanInt} for the fractional derivative
in \eqref{Exmp1Indirect}, we get the approximated problem
\begin{equation}
\label{expanCOV}
\begin{aligned}
\min\quad &\tilde{J}[x(\cdot)]=\int_0^1 \left[\sum_{n=0}^N
C(n,\a)t^{n-\a}x^{(n)}(t)-\dot{x}^2(t)\right]dt\\
&x(0)=0,\quad x(1)=1,
\end{aligned}
\end{equation}
which is a classical higher-order problem of the calculus of variations
that depends on derivatives up to order $N$.
The corresponding necessary optimality condition is a well-known result.

\begin{thm}[\bf \textrm{cf.}, \textrm{e.g.}, \cite{Lebedev}]
Suppose that $x(\cdot)\in C^{2N}[a,b]$ minimizes
$$
\int_a^b L(t,x(t),x^{(1)}(t),x^{(2)}(t),\ldots,x^{(N)}(t))dt
$$
with given boundary conditions
\begin{eqnarray*}
x(a)=a_0, & &x(b)=b_0,\\
x^{(1)}(a)=a_1, & &x^{(1)}(b)=b_1,\\
&\vdots&\\
x^{(N-1)}(a)=a_{N-1}, & & x^{(N-1)}(b)=b_{N-1}.
\end{eqnarray*}
Then $x(\cdot)$ satisfies the Euler--Lagrange equation
\begin{equation}
\label{ELN}
\frac{\partial L}{\partial x}-\frac{d}{dt}\left(\frac{\partial L}{\partial x^{(1)}}\right)
+\frac{d^2}{dt^2}\left(\frac{\partial L}{\partial x^{(2)}}\right)
-\cdots+(-1)^N\frac{d^N}{dt^N}\left(\frac{\partial L}{\partial x^{(N)}}\right)=0.
\end{equation}
\end{thm}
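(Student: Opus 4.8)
The plan is to adapt the classical first-variation argument to the higher-order setting. First I would fix a minimizer $x(\cdot)$ and introduce a one-parameter family of admissible competitors $x(t)+\epsilon\eta(t)$, where the variation $\eta(\cdot)\in C^{2N}[a,b]$ is required to satisfy $\eta^{(j)}(a)=\eta^{(j)}(b)=0$ for every $j=0,1,\ldots,N-1$. This constraint guarantees that each competitor meets all the prescribed boundary data on $x,x^{(1)},\ldots,x^{(N-1)}$, so that $\epsilon=0$ is an interior minimum of the real function
$$
\phi(\epsilon)=\int_a^b L\bigl(t,x+\epsilon\eta,x^{(1)}+\epsilon\eta^{(1)},\ldots,x^{(N)}+\epsilon\eta^{(N)}\bigr)\,dt.
$$

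Next I would impose the stationarity condition $\phi'(0)=0$. Differentiating under the integral sign, which is justified by the smoothness of $L$ and the compactness of $[a,b]$, yields the first variation
$$
\phi'(0)=\int_a^b\sum_{k=0}^N\frac{\partial L}{\partial x^{(k)}}\,\eta^{(k)}\,dt=0.
$$

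The heart of the proof is then to integrate each summand by parts enough times to transfer all derivatives from $\eta$ onto the coefficient $\partial L/\partial x^{(k)}$. Integrating the $k$-th term by parts $k$ times produces $(-1)^k\int_a^b \frac{d^k}{dt^k}\bigl(\partial L/\partial x^{(k)}\bigr)\eta\,dt$ together with boundary contributions that are linear combinations of $\eta^{(0)},\ldots,\eta^{(k-1)}$ evaluated at $a$ and $b$. Since $k\le N$, every derivative occurring in those boundary terms has order at most $N-1$, so all of them vanish by the choice of $\eta$; this is exactly where the hypothesis that the end values of $x,x^{(1)},\ldots,x^{(N-1)}$ are prescribed gets used. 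The regularity assumption $x(\cdot)\in C^{2N}[a,b]$ is precisely what makes the $N$-fold differentiation of $\partial L/\partial x^{(N)}$ legitimate and keeps the resulting integrand continuous.

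Collecting the terms leaves
$$
\int_a^b\left[\sum_{k=0}^N(-1)^k\frac{d^k}{dt^k}\frac{\partial L}{\partial x^{(k)}}\right]\eta\,dt=0
$$
for every admissible $\eta$. I would finish by invoking the fundamental lemma of the calculus of variations: since the bracketed expression is continuous and its integral against every such $\eta$ vanishes, the bracket must be identically zero on $[a,b]$, which is precisely equation \eqref{ELN}. The main point requiring care is the boundary-term bookkeeping in the repeated integration by parts; a secondary check is that, despite the $2N$ imposed end-conditions, the class of admissible variations remains rich enough (it contains, for instance, all functions compactly supported in $(a,b)$) for the fundamental lemma to apply.
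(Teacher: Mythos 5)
Your proof is correct, but note that the paper itself does not prove this theorem at all: it is stated as a known classical result, cited from the literature (Lebedev and Cloud's book on the calculus of variations), and used as a black box to derive the ODE for the approximated problem \eqref{expanCOV}. So there is no in-paper argument to compare against; what you have written is the standard proof of the higher-order Euler--Lagrange (Euler--Poisson) equation, and it is sound. Your bookkeeping is right: integrating the $k$-th term by parts $k$ times produces boundary contributions involving only $\eta^{(0)},\ldots,\eta^{(k-1)}$, all of which vanish because $k\le N$ and the variations are pinned to order $N-1$ at both endpoints; the hypothesis $x(\cdot)\in C^{2N}[a,b]$ (together with the implicit smoothness of $L$) keeps the bracketed expression continuous so the fundamental lemma applies, and your remark that compactly supported variations already suffice for that lemma is the right observation. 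It is also worth noting that your technique --- one-parameter family $x+\epsilon\eta$, stationarity of $\phi(\epsilon)$ at $\epsilon=0$, integration by parts, fundamental lemma --- is exactly the scheme the paper does spell out for its \emph{fractional} Euler--Lagrange theorem \eqref{ELeqnIndirect}, where the ordinary integration by parts is replaced by the fractional formula $\int_a^b g(t)\LD f(t)\,dt=\int_a^b f(t)\RD g(t)\,dt$; so your argument is the integer-order specialization of the method the authors rely on throughout.
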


In general \eqref{ELN} is an ODE of order $2N$, depending on the order $N$
of the approximation we choose, and the method leaves $2N-2$ parameters unknown.
In our example, however, the Lagrangian in \eqref{expanCOV} is linear with respect
to all derivatives of order higher than two.
The resulting Euler--Lagrange equation is the second order ODE
\begin{equation*}
\sum_{n=0}^N (-1)^nC(n,\a)\frac{d^n}{dt^n}(t^{n-\a})-\frac{d}{dt}\left[-2\dot{x}(t)\right]=0
\end{equation*}
that has the solution
\begin{equation*}
x(t)=M_1(\a,N)t^{2-\a}+M_2(\a,N)t,
\end{equation*}
where
\begin{eqnarray*}
M_1(\a,N)&=&-\frac{1}{2\Gamma(3-\a)}\left[\sum_{n=0}^N(-1)^n\Gamma(n+1-\a)C(n,\a)\right],\\
M_2(\a,N)&=&\left[1+\frac{1}{2\Gamma(3-\a)}\sum_{n=0}^N(-1)^n\Gamma(n+1-\a)C(n,\a)\right].
\end{eqnarray*}
Figure~\ref{expIntFig} shows the analytic solution together with several approximations.
It reveals that by increasing $N$, approximate solutions do not converge to the analytic one.
The reason is the fact that the solution \eqref{solEx51} to Example~\ref{Example2} is not
an analytic function. We conclude that \eqref{expanInt} may not be a good choice
to approximate fractional variational problems. In contrast, as we shall see,
the approximation \eqref{expanMom} leads to good results.
\begin{figure}
\begin{center}
\includegraphics[scale=.6]{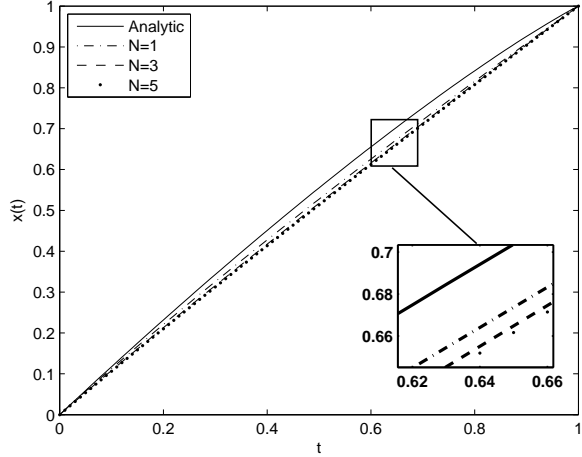}
\caption{Analytic versus approximate solutions to Example~\ref{Example2}
using approximation \eqref{expanInt} with $\a=0.5$.}
\label{expIntFig}
\end{center}
\end{figure}

To solve Example~\ref{Example2} using \eqref{expanInt} as an approximation
for the fractional derivative, the problem becomes
\begin{equation*}
\begin{aligned}
\min\quad &\tilde{J}[x(\cdot)]=\int_0^1 \left(\sum_{n=0}^N
C(n,\a)t^{n-\a}x^{(n)}(t)-1\right)^2dt,\\
&x(0)=0,\quad x(1)=\frac{1}{\Gamma(\a+1)}.
\end{aligned}
\end{equation*}
The Euler--Lagrange equation \eqref{ELN} gives a $2N$ order ODE.
For $N \ge 2$ this approach is inappropriate since the two given
boundary conditions $x(0)=0$ and $x(1)=\frac{1}{\Gamma(\a+1)}$
are not enough to determine the $2N$ constants of integration.


\subsection{Expansion through the Moments of a Function}

If we use \eqref{expanMom} to approximate the optimization
problem \eqref{Exmp1Indirect}, with $A=A(\a,N)$, $B=B(\a,N)$
and $C_p=C(\a,p)$, we have
\begin{equation}
\label{momCOV}
\begin{split}
\tilde{J}[x(\cdot)]&=\int_0^1 \left[At^{-\a}x(t)
+Bt^{1-\a}\dot{x}(t)-\sum_{p=2}^N
C_pt^{1-p-\a}V_p(t)-\dot{x}^2(t)\right]dt,\\
\dot{V}_p(t)&=(1-p)t^{p-2}x(t),\quad p=2,\ldots,N,\\
V_p(0)&=0, \quad p=2,\ldots,N,\\
x(0)&=0,\quad x(1)=1.
\end{split}
\end{equation}
Problem \eqref{momCOV} is constrained with a set of ordinary differential
equations and is natural to look to it as an optimal control problem \cite{Pontryagin}.
For that we introduce the control variable $u(t) = \dot{x}(t)$. Then, using
the Lagrange multipliers $\lambda_1, \lambda_2,\ldots,\lambda_N$,
and the Hamiltonian system, one can reduce \eqref{momCOV} to the study
of the two point boundary value problem
\begin{equation}
\label{tpbvp}
\left\{
\begin{array}{rl}
\dot{x}(t)&=\frac{1}{2}Bt^{1-\a}-\frac{1}{2}\lambda_1(t),\\
\dot{V}_p(t)&=(1-p)t^{p-2}x(t),\quad p=2,\ldots,N,\\
\dot{\lambda}_1(t)&=At^{-\a}-\sum_{p=2}^N(1-p)t^{p-2}\lambda_p(t),\\
\dot{\lambda}_p(t)&=-C_pt^{(1-p-\a)},\quad p=2,\ldots,N,\\
\end{array}\right.
\end{equation}
with boundary conditions
\begin{equation*}
\left\{
\begin{array}{l}
x(0)=0, \\
V_p(0)=0,\quad p=2,\ldots,N,
\end{array}
\right.\qquad\left\{
\begin{array}{l}
x(1)=1,\\
\lambda_p(1)=0,\quad p=2,\ldots,N,
\end{array}
\right.
\end{equation*}
where $x(0)=0$ and $x(1)=1$ are given. We have $V_p(0)=0$, $p=2,\ldots,N$,
due to \eqref{sysVp} and $\lambda_p(1)=0$, $p=2,\ldots,N$, because $V_p$
is free at final time for $p=2,\ldots,N$ \cite{Pontryagin}. In general,
the Hamiltonian system is a nonlinear, hard to solve, two point boundary
value problem that needs special numerical methods. In this case, however,
\eqref{tpbvp} is a non-coupled system of ordinary differential equations
and is easily solved to give
\begin{equation*}
x(t)=M(\a,N)t^{2-\a}-\sum_{p=2}^N \frac{C(\a,p)}{2p(2-p-\a)}t^p
+\left[ 1-M(\a,N)+\sum_{p=2}^N \frac{C(\a,p)}{2p(2-p-\a)}\right]t,
\end{equation*}
where
$$
M(\a,N)=\frac{1}{2(2-\a)}\left[ B(\a,N)-\frac{A(\a,N)}{1-\a}
-\sum_{p=2}^N \frac{C(\a,p)(1-p)}{(1-\a)(2-p-\a)} \right].
$$
Figure~\ref{expMomFig} shows the graph of $x(\cdot)$ for different values of $N$.
\begin{figure}
\begin{center}
\includegraphics[scale=.6]{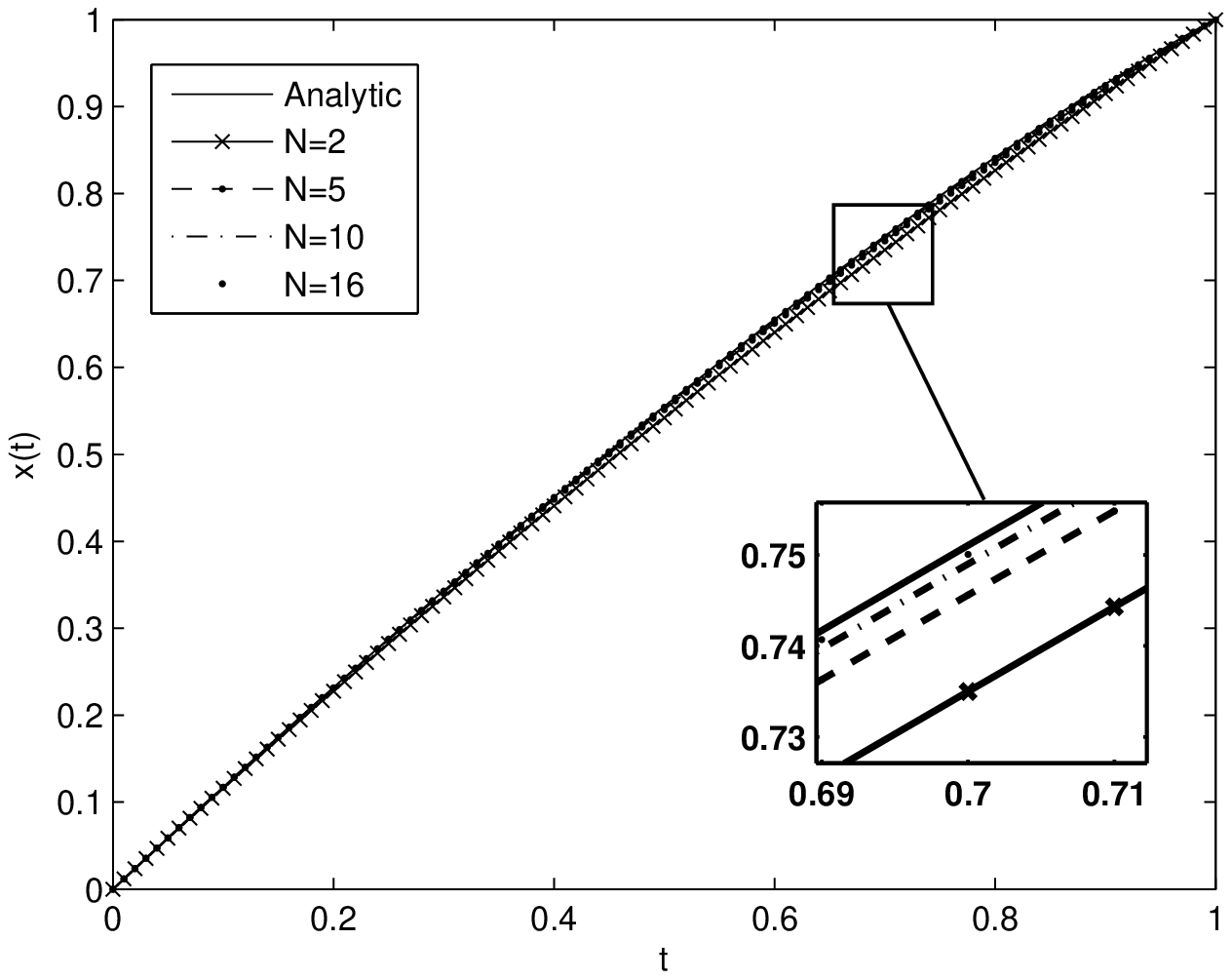}
\caption{Analytic versus approximate solutions to Example~\ref{Example2}
using approximation \eqref{expanMom} with $\a=0.5$.}
\label{expMomFig}
\end{center}
\end{figure}

Let us now approximate Example~\ref{Example4} using \eqref{expanMom}.
The resulting minimization problem has the following form:
\begin{equation}
\label{ex52Mom}
\begin{aligned}
\min\quad &\tilde{J}[x(\cdot)]=\int_0^1 \left[At^{-\a}x(t)
+Bt^{1-\a}\dot{x}(t)-\sum_{p=2}^N
C_pt^{1-p-\a}V_p(t)-1\right]^2dt,\\
& \dot{V}_p(t)=(1-p)t^{p-2}x(t),\quad p=2,\ldots,N,\\
& V_p(0)=0, \quad p=2,\ldots,N,\\
&x(0)=0,\quad x(1)=\frac{1}{\Gamma(\a+1)}.
\end{aligned}
\end{equation}
Following the classical optimal control approach
of Pontryagin \cite{Pontryagin}, this time with
$$
u(t)=At^{-\a}x(t)+Bt^{1-\a}\dot{x}(t)
-\sum_{p=2}^N C_pt^{1-p-\a}V_p(t),
$$
we conclude that the solution to \eqref{ex52Mom}
satisfies the system of differential equations
\begin{equation}
\label{ex52tpbvp}
\left\{
\begin{array}{rl}
\dot{x}(t)&=-AB^{-1}t^{-1}x(t)+\sum_{p=2}^NB^{-1}C_pt^{-p}V_p(t)
+\frac{1}{2}B^{-2}t^{2\a-2}\lambda_1(t)+B^{-1}t^{\a-1},\\
\dot{V}_p(t)&=(1-p)t^{p-2}x(t),\quad p=2,\ldots,N,\\
\dot{\lambda}_1(t)&=AB^{-1}t^{-1}\lambda_1
-\sum_{p=2}^N(1-p)t^{p-2}\lambda_p(t),\\
\dot{\lambda}_p(t)&=-B^{-1}C_pt^{-p}\lambda_1,
\quad p=2,\ldots,N,\\
\end{array}\right.
\end{equation}
where $A=A(\a,N)$, $B=B(\a,N)$ and $C_p=C(\a,p)$ are defined
according to Section~\ref{secexpan},
subject to the boundary conditions
\begin{equation}
\label{sysB52}
\left\{
\begin{array}{l}
x(0)=0,\\
V_p(0)=0,\quad p=2,\ldots,N,
\end{array}
\right.\qquad\left\{
\begin{array}{l}
x(1)=\frac{1}{\Gamma(\a+1)},\\
\lambda_p(1)=0,\quad p=2,\ldots,N.
\end{array}
\right.
\end{equation}
The solution to system \eqref{ex52tpbvp}--\eqref{sysB52},
with $N=2$, is shown in Figure~\ref{expMomFig52}.
\begin{figure}
\begin{center}
\includegraphics[scale=.6]{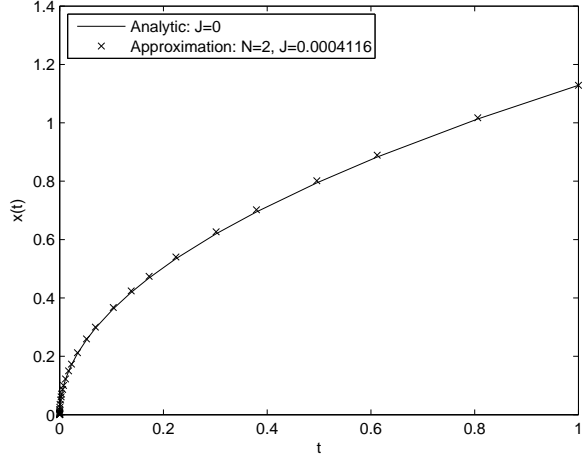}
\caption{Analytic versus approximate solution to Example~\ref{Example4}
using approximation \eqref{expanMom} with $\a=0.5$.}
\label{expMomFig52}
\end{center}
\end{figure}


\section{Conclusion}

The realm of numerical methods in scientific fields is vastly growing due
to the very fast progresses in computational sciences and technologies.
Nevertheless, the intrinsic complexity of fractional calculus, caused
partially by non-local properties of fractional derivatives and integrals,
makes it rather difficult to find efficient numerical methods in this field.
It seems enough to mention here that, up to the time of this manuscript,
and to the best of our knowledge, there is no routine available for solving
a fractional differential equation as Runge--Kutta for ordinary ones. Despite
this fact, however, the literature exhibits a growing interest and improving
achievements in numerical methods for fractional calculus in general
and fractional variational problems specifically.

This chapter was devoted to discuss some aspects of the very well-known methods
for solving variational problems. Namely, we studied the notions of direct
and indirect methods in the classical calculus of variations and we also mentioned
some connections to optimal control. Consequently, we introduced the generalizations
of these notions to the field of fractional calculus of variations and fractional optimal control.

The method of finite differences, as discussed here, seems to be a potential first
candidate to solve fractional variational problems. Although a first order approximation
was used for all examples, the results are satisfactory and even though it is more complicated
than in the classical case, it still inherits some sort of simplicity and an ease of implementation.

The lack of efficient numerical methods for fractional variational problems is overcome,
partially, by the indirect methods of this chapter. Once we transformed the fractional
variational problem to an approximated classical one, the majority of classical methods
can be applied to get an approximate solution. Nevertheless, the procedure
is not completely straightforward. The singularity of fractional operators is still present
in the approximating formulas and it makes the solution procedure more complicated.


\section*{Acknowledgements}

Part of first author's Ph.D., carried out
at the University of Aveiro under the Doctoral Program
in Mathematics and Applications (PDMA)
of Universities of Aveiro and Minho.
Work supported by {\it FEDER} funds through {\it COMPETE}
--- Operational Programme Factors of Competitiveness
(``Programa Operacional Factores de Competitividade'')
and by Portuguese funds through the {\it Center for Research
and Development in Mathematics and Applications} (University of Aveiro)
and the Portuguese Foundation for Science and Technology
(``FCT--Funda\c{c}\~{a}o para a Ci\^{e}ncia e a Tecnologia''),
within project PEst-C/MAT/UI4106/20\-11 with COMPETE
number FCOMP-01-0124-FEDER-022690. Pooseh was also
supported by the FCT Ph.D. fellowship SFRH/BD/33761/2009;
Torres by EU funding under the 7th Framework Programme FP7-PEOPLE-2010-ITN,
grant agreement no.~264735-SADCO.


\bigskip



\begin{thebibliography}{99}

\bibitem{Agrawal}
O. P. Agrawal,
Formulation of Euler--Lagrange equations for fractional variational problems,
\textit{J. Math. Anal. Appl.} {272} (2002), no.~1, 368--379.

\bibitem{Agrawal1}
O. P. Agrawal,
A general formulation and solution scheme for fractional optimal control problems,
\textit{Nonlinear Dynam.} {38} (2004), no.~1-4, 323--337.

\bibitem{APTIndInt}
R. Almeida, S. Pooseh\ and\ D. F. M. Torres,
Fractional variational problems depending on indefinite integrals,
\textit{Nonlinear Anal.} {75} (2012), no.~3, 1009--1025.

\bibitem{AlD}
R. Almeida\ and\ D. F. M. Torres,
Leitmann's direct method for fractional optimization problems,
\textit{Appl. Math. Comput.} {217} (2010), no.~3, 956--962.

\bibitem{Andrews}
G. E. Andrews, R. Askey\ and\ R. Roy,
{\it Special functions},
\textit{Encyclopedia of Mathematics and its Applications},
71, Cambridge Univ. Press, Cambridge, 1999.

\bibitem{Atan}
T. M. Atanackovi\'c, S. Konjik\ and\ S. Pilipovi\'c,
Variational problems with fractional derivatives: Euler--Lagrange equations,
\textit{J. Phys. A} {41} (2008), no.~9, 095201, 12~pp.

\bibitem{Atan0}
T. M. Atanackovi\'c\ and B. Stankovic,
An expansion formula for fractional derivatives and its application,
\textit{Fract. Calc. Appl. Anal.} {7} (2004), no.~3, 365–-378.

\bibitem{Atan2}
T. M. Atanackovi\'c\ and\ B. Stankovic,
On a numerical scheme for solving differential equations of fractional order,
\textit{Mech. Res. Comm.} {35} (2008), no.~7, 429--438.

\bibitem{Ozlem}
D. Baleanu, O. Defterli\ and\ O. P. Agrawal,
A central difference numerical scheme for fractional optimal control problems,
\textit{J. Vib. Control} {15} (2009), no.~4, 583--597.

\bibitem{Butzer2}
P. L. Butzer, A. A. Kilbas\ and\ J. J. Trujillo,
Stirling functions of the second kind in the setting of difference and fractional calculus,
\textit{Numer. Funct. Anal. Optim.} {24} (2003), no.~7-8, 673--711.

\bibitem{Das}
S. Das,
{\it Functional fractional calculus for system identification and controls},
Springer, Berlin, 2008.

\bibitem{Kai}
K. Diethelm,
{\it The analysis of fractional differential equations},
\textit{Lecture Notes in Mathematics}, 2004, Springer, Berlin, 2010.

\bibitem{Kai1}
K. Diethelm, N. J. Ford\ and\ A. D. Freed,
A predictor-corrector approach for the numerical solution
of fractional differential equations,
\textit{Nonlinear Dynam.} {29} (2002), no.~1-4, 3--22.

\bibitem{Djordjevic}
V. D. Djordjevic\ and\ T. M. Atanackovic,
Similarity solutions to nonlinear heat conduction
and Burgers/Korteweg-de Vries fractional equations,
\textit{J. Comput. Appl. Math.} {222} (2008), no.~2, 701--714.

\bibitem{Elsgolts}
L. Elsgolts,
{\it Differential equations and the calculus of variations},
translated from the Russian by George Yankovsky, Mir Publishers, Moscow, 1973.

\bibitem{Ford}
N. J. Ford\ and\ J. A. Connolly,
Comparison of numerical methods for fractional differential equations,
\textit{Commun. Pure Appl. Anal.} {5} (2006), no.~2, 289--306.

\bibitem{Jelicic}
Z. D. Jelicic\ and\ N. Petrovacki,
Optimality conditions and a solution scheme for fractional optimal control problems,
\textit{Struct. Multidiscip. Optim.} {38} (2009), no.~6, 571--581.

\bibitem{Kilbas2}
A. A. Kilbas,
Hadamard-type fractional calculus,
\textit{J. Korean Math. Soc.} {38} (2001), no.~6, 1191--1204.

\bibitem{Kilbas}
A. A. Kilbas, H. M. Srivastava\ and\ J. J. Trujillo,
{\it Theory and applications of fractional differential equations},
North--Holland Mathematics Studies, 204, Elsevier, Amsterdam, 2006.

\bibitem{Agrawal2}
P. Kumar\ and\ O. P. Agrawal,
An approximate method for numerical solution
of fractional differential equations,
\textit{Signal Process.} {86} (2006), 2602--2610.

\bibitem{Lebedev}
L. P. Lebedev\ and\ M. J. Cloud,
{\it The calculus of variations and functional analysis},
World Sci. Publishing, River Edge, NJ, 2003.

\bibitem{Malinowska}
A. B. Malinowska\ and\ D. F. M. Torres,
Generalized natural boundary conditions for fractional variational
problems in terms of the Caputo derivative,
\textit{Comput. Math. Appl.} {59} (2010), no.~9, 3110--3116.

\bibitem{Meerschaert}
M. M. Meerschaert\ and\ C. Tadjeran,
Finite difference approximations for fractional
advection-dispersion flow equations,
\textit{J. Comput. Appl. Math.} {172} (2004), no.~1, 65--77.

\bibitem{ID:207}
T. Odzijewicz, A. B. Malinowska\ and\ D. F. M. Torres,
Fractional variational calculus with classical and combined Caputo derivatives,
\textit{Nonlinear Anal.} {75} (2012), no.~3, 1507--1515.

\bibitem{Podlubny}
I. Podlubny,
{\it Fractional differential equations},
\textit{Mathematics in Science and Engineering},
198, Academic Press, San Diego, CA, 1999.

\bibitem{Pontryagin}
L. S. Pontryagin, V. G. Boltyanskii, R. V. Gamkrelidze\ and\ E. F. Mishchenko,
{\it The mathematical theory of optimal processes}, Translated from the Russian
by K. N. Trirogoff; edited by L. W. Neustadt Interscience
Publishers John Wiley \& Sons, Inc.\, New York, 1962.

\bibitem{PATHad}
S. Pooseh, R. Almeida and D. F. M. Torres,
Expansion formulas in terms of integer-order derivatives
for the Hadamard fractional integral and derivative,
\textit{Numer. Funct. Anal. Optim.} 33 (2012), no.~3, 301--319.

\bibitem{PATFracInt}
S. Pooseh, R. Almeida and D. F. M. Torres,
Approximation of fractional integrals by means of derivatives,
\textit{Comput. Math. Appl.} 64 (2012), no.~10, 3090--3100.

\bibitem{fda12}
S. Pooseh, R. Almeida\ and\ D. F. M. Torres,
Discrete direct methods in the fractional calculus of variations,
\textit{Comput. Math. Appl.} 66 (2013), no.~5, 668--676.

\bibitem{Riewe}
F. Riewe,
Nonconservative Lagrangian and Hamiltonian mechanics,
\textit{Phys. Rev. E} (3) {53} (1996), no.~2, 1890--1899.

\bibitem{Riewe1}
F. Riewe,
Mechanics with fractional derivatives,
\textit{Phys. Rev. E} (3) {55} (1997), no.~3, part B, 3581--3592.

\bibitem{Samko}
S. G. Samko, A. A. Kilbas\ and\ O. I. Marichev,
{\it Fractional integrals and derivatives},
translated from the 1987 Russian original,
Gordon and Breach, Yverdon, 1993.

\bibitem{Machado}
J. A. Tenreiro Machado, V. Kiryakova\ and\ F. Mainardi,
Recent history of fractional calculus,
\textit{Commun. Nonlinear Sci. Numer. Simul.} {16} (2011), no.~3, 1140--1153.

\bibitem{Machado1}
J. A. Tenreiro Machado, M. F. Silva, R. S. Barbosa, I. S. Jesus,
C. M. Reis, M. G. Marcos\ and\ A. F. Galhano,
Some applications of fractional calculus in engineering,
\textit{Math. Probl. Eng.}, Art. ID 639801 (2010) 34~pp.

\end{thebibliography}
\end{document}